\newenvironment{proof}{\par \noindent {\bf Proof}.\ }{\hfill$\Box$ \par \vspace{11pt}}
\begin{document}
\begin{frontmatter}
  \title{Coloration of $K_7^-$-minor free graphs\thanksref{ANR-EGOS}}
  \author[I3M]{Boris Albar}
  \ead{boris.albar@lirmm.fr}
  \address[I3M]{I3M \& LIRMM, CNRS and Univ. Montpellier 2, Place Eugène Bataillon, 34095
    Montpellier Cedex 5, France}
  \thanks[ANR-EGOS]{This work was partially supported by the ANR grant
    EGOS 12 JS02 002 01}

  \begin{abstract}
    Hadwiger's conjecture says that every $K_t$-minor free graph is $(t - 1)$-colorable.
	This problem has been proved for $t \leq 6$ but remains open for $t \geq 7$.
	$K_7$-minor free graphs have been proved to be $8$-colorable (Albar \& Gon\c{c}alves, 2013).
	We prove here that $K_7^-$-minor free graphs are $7$-colorable, where $K_7^-$ is the graph obtained from
	$K_7$ by removing one edge.
  \end{abstract}
\end{frontmatter}

\section{Introduction}

A minor of a graph $G$ is a graph obtained from $G$ by a succession of
edge deletions, edge contractions and vertex deletions.
All graphs we consider are simple, i.e. without loops or multiple edges.

Hadwiger's conjecture says that every $t$-chromatic graph $G$
(i.e. $\chi(G) =t$) contains $K_t$ has a minor. This conjecture has
been proved for $t \leq 6$, where the case $t = 5$ is equivalent to
the Four Color Theorem by Wagner's structure theorem of $K_5$-minor
free graphs, and the case $t = 6$ has been proved by Robertson, Seymour
and Thomas~\cite{rst1}. The conjecture remains open for $t \geq 7$.

In \cite{ag1}, the author and D. Gonçalves proved that $K_7$-minor
graphs are $8$-colorable.

In \cite{kt1}, Kawarabayashi and Toft proved that any $K_7$ and
$K_{4,4}$-minor free graph is $6$-colorable by using the fact that
a $K_{4,4}$-minor free graph contains at most $4n - 8$ edges. In particular,
this implies that it contains some vertices of degree $7$. In their proof they show
that most of these vertices in a $7$-chromatic critical graph (i.e. such that every
strict minor of this graph is $6$-colorable) are contained in a
$K_5$ subgraph and use these subgraphs and the $7$-connectivity of a
$7$-chromatic critical graph to find a $K_7$-minor.

We use here similar techniques to prove the following theorem.
\begin{thm}
Every $K_7^-$-minor free graph is $7$-colorable.
\label{th:7color}
\end{thm}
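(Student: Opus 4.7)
My plan is to adapt the Kawarabayashi--Toft strategy~\cite{kt1} for $K_7$ and $K_{4,4}$-minor free graphs to the present stronger excluded minor $K_7^-$. I will argue by contradiction: suppose $G$ is a $K_7^-$-minor free graph with $\chi(G) \geq 8$, chosen to be contraction-critical, i.e., every proper minor of $G$ is $7$-colorable. Then $G$ is $8$-vertex-critical, so $\delta(G) \geq 7$; moreover, by a standard argument using the minor-criticality (any small separator would allow a contradiction via the usual separation-coloring trick), $G$ should be $7$-connected.

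The second step is an edge density bound. Since $K_7^-$-minor freeness is strictly stronger than $K_7$-minor freeness, I expect a bound sharper than Mader's corresponding one for $K_7$. The target is an estimate of the form $e(G) \leq c\cdot n$ for some constant $c$ small enough that, combined with $\delta(G) \geq 7$, it forces a substantial set of vertices of degree \emph{exactly} $7$. A Jakobsen-style inductive argument (splitting on a low-degree vertex, contracting through it, and reassembling minors) is the most plausible route; proving this bound is the first real technical hurdle.

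The third step is a local analysis around each degree-$7$ vertex $v$. Using $7$-connectivity together with the fact that $G - v$ is $7$-colorable, I would try to show that $G[N(v)]$ must contain a $K_4$ (so $\{v\} \cup N(v)$ contains a $K_5$). The standard idea: if $G[N(v)]$ is too sparse, one can $7$-colour $G - v$ so that $N(v)$ uses at most $6$ colours, freeing a colour for $v$ and contradicting $\chi(G) \geq 8$; pushing this sparsity obstruction to force a $K_4$ (or even $K_5$) among the neighbours requires a more delicate minor argument, contracting pieces of $G - N[v]$ into $N(v)$ to create an unavoidable clique.

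Finally, given such a $K_5$, I would use the $7$-connectivity of $G$ to route disjoint paths from the $K_5$ to two further small connected subgraphs outside it, fusing each path into a branch set; together this yields $7$ pairwise adjacent branch sets with at most one adjacency missing, i.e.\ a $K_7^-$ minor, contradicting our assumption. The principal difficulty lies in the clique-in-neighbourhood lemma: combining the density bound with minor-minimality to force enough local structure, while exploiting the fact that we need only $K_7^-$ and not $K_7$ (one edge may be absent), is where the proof will require the most care, and balancing the weaker excluded-minor hypothesis against the correspondingly tighter density bound is likely the main obstacle.
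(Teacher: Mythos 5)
Your high-level plan (critical counterexample, $7$-connectivity, density bound, cliques in neighbourhoods of low-degree vertices, then routing a $K_7^-$-minor) is indeed the skeleton of the paper's proof, but two of your steps fail as stated and the hardest parts of the argument are exactly where you have left placeholders. First, the relevant low-degree vertices have degree $8$, not $7$: in an $8$-chromatic critical $K_7^-$-minor free graph a degree-$7$ vertex is impossible, since the folklore criticality lemma forces its neighbourhood to have no stable set of size $2$, i.e.\ to be a $K_7$, which already contains $K_7^-$. The density bound that actually holds is Jakobsen's: a graph on $\geq 7$ vertices with at least $\frac{9}{2}n-12$ edges has a $K_7^-$-minor or is a $(K_{2,2,2,2},K_6,4)$-cockade (the cockades being excluded by $7$-connectivity). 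This gives average degree below $9$ and hence at least $25$ vertices of degree exactly $8$; no bound of the form $e(G)\le cn$ with $c<4$ is available, so you cannot force degree-$7$ vertices. Second, your hope of forcing a $K_4$ in the neighbourhood of each such vertex cannot succeed by a recolouring/sparsity argument alone: the neighbourhood has $8$ vertices and no stable set of size $3$, and since $R(4,3)=9$ the circulant $C_8^{1,2}$ is a genuine counterexample. The paper must treat this exceptional neighbourhood separately, proving (via $4$-connectivity, planarity of the neighbourhood, and the Robertson--Seymour--Thomas lemma on $K_5$-minors rooted at a triangle in $4$-connected non-planar graphs) that at most one degree-$8$ vertex can have such a neighbourhood; this occupies most of the paper and is not something your sketch accounts for.

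The final routing step is also underpowered. A single $K_5$ together with two external connected pieces joined by disjoint paths does not obviously yield seven pairwise adjacent branch sets missing at most one edge: you would need each external piece adjacent to all five branch vertices of the $K_5$ and to each other, which $7$-connectivity alone does not provide. The paper instead extracts at least five distinct $K_5$-subgraphs from the $24$ degree-$8$ vertices with $K_4$-neighbourhoods, proves through a careful intersection analysis (no two $K_5$'s disjoint, none meeting in exactly one or two or more than two vertices, on pain of exhibiting a $K_7$- or $K_7^-$-minor) that three of them must span at least $12$ vertices, and then invokes the Kawarabayashi--Toft theorem that a $7$-connected graph on $\geq 19$ vertices with three such $K_5$'s has a $K_7$-minor. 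Without this three-$K_5$ mechanism, or some substitute for it, your last step does not close.
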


\section{Proof of Theorem \ref{th:7color}}

Let $G$ be a minimal counter example to Theorem \ref{th:7color}, i.e. a minimal
$K_7^-$-minor free $8$-chromatic critical graph,

First we will prove that a lot of vertices of degree $8$ are contained in $K_5$ subgraphs
and then we will apply some techniques introduced in \cite{kt1} to conclude.

We will use the following theorem of Jakobsen to prove that $K_7^-$-minor free
graphs are $8$-degenerate.

\begin{thm}[Jakobsen, 1983, \cite{jakobsen2}]
Every graph with at least $7$ vertices and at least $\frac{9}{2} n - 12$ edges has a $K_7^-$-minor
or is a $(K_{2,2,2,2}, K_6, 4)$-cockade.
\label{th:jacokk7}
\end{thm}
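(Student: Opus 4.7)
My plan is to proceed by induction on the number of vertices $n$, following the standard pattern for extremal density/minor results of Mader and Jakobsen. At each step I want to either reduce to a smaller graph satisfying the same edge bound (by deletion or contraction) or, failing that, to show the graph is forced into the extremal cockade structure.

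The base case $n = 7$ is immediate: the hypothesis forces $m \geq 20$, and any $7$-vertex graph with at least $20$ edges is $K_7^-$ or $K_7$, both of which contain $K_7^-$ as a spanning subgraph. No cockade has exactly $7$ vertices either, since the bricks $K_6$ and $K_{2,2,2,2}$ have $6$ and $8$ vertices respectively and each $K_4$-gluing increases the vertex count by an even number, so every cockade has even order.

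For the inductive step $n \geq 8$, I would split on $\delta(G)$. If $\delta(G) \leq 4$, delete a minimum-degree vertex $v$: since $\tfrac{9}{2} > 4$ the graph $G - v$ still satisfies $|E(G - v)| \geq \tfrac{9}{2}(n-1) - 12$, and induction applies. If $G - v$ has a $K_7^-$-minor, so does $G$; if $G - v$ is a cockade, then $G$ itself cannot be a cockade, because every cockade vertex lies in some brick in which it has degree at least $5$. One must then exhibit a $K_7^-$-minor directly in $G$, using $v$'s at most four neighbors together with paths through the cockade. If instead $\delta(G) \geq 5$, I would try to contract an edge $uv$ with few common neighbors: contraction costs $1 + |N(u) \cap N(v)|$ edges and one vertex, so the inductive bound is preserved whenever $|N(u) \cap N(v)| \leq 3$. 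If no such edge exists, then every edge lies in at least four triangles, so $G$ is locally extremely dense and highly connected, and one expects to find a $K_7^-$-minor by packing branch sets into a single vertex's neighborhood.

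The main obstacles I foresee are twofold. First, in the case $\delta(G) \leq 4$ with $G - v$ a cockade, handling how $v$'s few neighbors distribute across the bricks of the cockade in order to build the minor is a delicate book-keeping task. Second, in the dense regime of $\delta(G) \geq 5$, the genuinely hard part is the converse characterization — showing that if $m = \tfrac{9}{2}n - 12$ holds and no $K_7^-$-minor exists, then $G$ is \emph{precisely} a $(K_{2,2,2,2}, K_6, 4)$-cockade. This extremal analysis, pinning down exactly which graphs saturate the bound, is traditionally the technical heart of such results and typically requires a thorough case analysis of how two adjacent bricks can share their gluing $K_4$ and how that constrains the global structure.
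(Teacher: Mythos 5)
First, note that the paper does not prove this statement at all: Theorem~\ref{th:jacokk7} is quoted as an external result of Jakobsen (1983), so there is no in-paper proof to compare yours against. Judged on its own, your proposal is a sensible outline of the classical Mader--Jakobsen induction, and the reductions you set up are correct as far as they go: the base case $n=7$ is right ($m\geq 20$ forces $K_7^-$ as a subgraph, and cockades have even order), and the deletion/contraction steps preserve the edge bound exactly as you compute. In fact the two reduction branches are easier than you fear. A $(K_{2,2,2,2},K_6,4)$-cockade on $k$ vertices has \emph{exactly} $\tfrac{9}{2}k-12$ edges, while deleting a vertex of degree at most $4$ (or contracting an edge with at most $3$ common neighbours) leaves strictly more than $\tfrac{9}{2}(n-1)-12$ edges; so in both branches the smaller graph cannot be a cockade and induction hands you a $K_7^-$-minor directly. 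The ``delicate book-keeping'' you anticipate for a degree-$\leq 4$ vertex attached to a cockade never arises.

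The genuine gap is the terminal case, which you dispatch with ``one expects to find a $K_7^-$-minor by packing branch sets into a single vertex's neighborhood.'' That case --- $\delta(G)\geq 5$ and every edge in at least four triangles --- is where the entire content of the theorem lives, precisely because the extremal graphs $K_6$ and $K_{2,2,2,2}$ (and all their cockades) satisfy both conditions and have \emph{no} $K_7^-$-minor. So no argument that only produces a minor there can be correct; you must instead prove a dichotomy: either a $K_7^-$-minor exists, or the local structure (each edge lying in $\geq 4$ triangles, joint neighbourhoods with no large independent sets, behaviour across $4$-cuts) forces $G$ to decompose along separating $K_4$'s into $K_6$ and $K_{2,2,2,2}$ bricks. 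You acknowledge this is ``the technical heart,'' but acknowledging it is not supplying it; as written the proposal proves the easy reductions and defers the theorem itself. To complete it you would need, at minimum, (i) an analysis of a minimum-order separation of the dense graph showing the separator is a $K_4$ and both sides inherit the hypotheses, and (ii) a classification of the $4$-connected (or irreducible) dense graphs with no $K_7^-$-minor as exactly $K_6$ and $K_{2,2,2,2}$ --- which is Jakobsen's actual theorem.
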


We also need the following theorem of Mader.

\begin{thm}[Mader, 1968, \cite{mader1}]
Any $k$-chromatic critical graph that is not isomorphic to $K_7$ is $7$-connected for $k \geq 7$.
\label{th:maderconnected}
\end{thm}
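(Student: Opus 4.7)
The plan is to derive a contradiction from the existence of the minimal counterexample $G$ defined just before Theorem~\ref{th:jacokk7}: a $K_7^-$-minor free, $8$-chromatic critical graph. First I would establish structural facts. Criticality forces $\delta(G) \geq 7$, and $\chi(G) = 8$ forces $n := |V(G)| \geq 8$. Applying Jakobsen's theorem (Theorem~\ref{th:jacokk7}) to $G$, and using that any $(K_{2,2,2,2}, K_6, 4)$-cockade is $\max(\chi(K_{2,2,2,2}), \chi(K_6)) = 6$-chromatic (so cannot be $G$), I obtain $|E(G)| < \frac{9}{2}n - 12$. Combined with $\delta(G)\geq 7$, a standard averaging argument shows that a linear fraction of the vertices have degree exactly $7$ or $8$; call these the \emph{small} vertices. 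Mader's theorem (Theorem~\ref{th:maderconnected}) then gives that $G$ is $7$-connected, since $G \neq K_7$ (as $\chi(K_7) = 7$).

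The second step is to show that for every small vertex $v$ the neighborhood $N(v)$ contains a $K_5$. By criticality every proper $7$-coloring of $G - v$ assigns all $7$ colors to $N(v)$; when $\deg v = 7$ each color appears exactly once, and when $\deg v = 8$ exactly one color is repeated. If $N(v)$ contained no $K_5$, I would exhibit two vertices $u, w \in N(v)$ of distinct colors $a, b$, non-adjacent in $G$, whose $(a,b)$-Kempe component containing $u$ can be swapped to free a color at $v$, yielding a proper $7$-coloring of $G$ and a contradiction. Here the key input is $K_7^-$-freeness: $N(v)$ cannot contain a $K_6$ minus an edge (otherwise $\{v\}$ together with that $K_6^{-}$ already realises $K_7^-$), which bounds the edge-density of $N(v)$, controls the number of neighbours of each color, and supplies the required non-adjacent pair in each Kempe-component analysis. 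This mirrors the analogous "neighbourhood-contains-$K_5$" lemma of Kawarabayashi--Toft~\cite{kt1} adapted to the $K_7^-$-free setting.

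The third and final step is to upgrade a small vertex $v$ with a $K_5 = \{x_1,\dots,x_5\} \subseteq N(v)$ to a $K_7^-$-minor, contradicting the choice of $G$. The six singletons $\{v\},\{x_1\},\dots,\{x_5\}$ form six pairwise-adjacent branch sets (a subdivided $K_6$ in the minor model). It therefore suffices to produce a seventh connected branch set $B$, disjoint from these six, adjacent to at least five of them; missing a single adjacency is exactly what distinguishes $K_7^-$ from $K_7$. Using $7$-connectivity together with the abundance of small vertices, I would pick a second small vertex $v'$ outside $\{v,x_1,\dots,x_5\}$ and apply Menger's theorem to obtain internally disjoint paths from $v'$ to five of the $\{v,x_1,\dots,x_5\}$; absorbing the internal vertices of these five paths into the single connected set $\{v'\}\cup(\text{interiors})$ provides the required $B$.

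The main obstacle I expect is the Kempe-chain step: showing $K_5 \subseteq N(v)$ when $\deg v = 8$, where a repeated color complicates the swap, and where one must use the $K_7^-$-freeness (rather than the stronger $K_7$-freeness used in~\cite{kt1}) to bar the bad Kempe configurations. The final minor construction is essentially a disjoint-paths argument once a suitable pair $(v,v')$ of small vertices with the $K_5$-neighborhood property has been isolated.
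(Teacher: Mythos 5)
Your proposal does not prove the statement it is attached to. The statement is Mader's theorem: \emph{every $k$-chromatic critical graph ($k \geq 7$) other than $K_7$ is $7$-connected}. This is a general result about critical graphs, with no hypothesis about forbidden minors, and the paper does not prove it at all --- it is imported as a black box from Mader's 1968 paper \cite{mader1}. What you have written instead is a proof plan for the paper's \emph{main} theorem (that $K_7^-$-minor free graphs are $7$-colorable), built out of Jakobsen's edge bound, a Kempe-chain argument for finding $K_5$'s in neighborhoods, and a Menger-type minor construction. None of these ingredients bears on the connectivity of an arbitrary $k$-critical graph.

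Worse, your plan is circular with respect to the target statement: in your first step you write that ``Mader's theorem (Theorem~\ref{th:maderconnected}) then gives that $G$ is $7$-connected,'' i.e.\ you invoke the very theorem you are supposed to be proving as one of your inputs. An actual proof of Mader's theorem would have to argue directly from criticality: given a separator $S$ of size at most $6$ in a $k$-critical graph $G \neq K_7$, one analyzes how the $(k-1)$-colorings of the pieces $G[A \cup S]$ and $G[B \cup S]$ restrict to $S$, and shows that unless $S$ induces a clique and the colorings can be permuted to agree, one of the pieces already requires $k$ colors, contradicting criticality; the clique case is then ruled out by a separate counting/identification argument. That line of reasoning (Dirac-style decomposition of critical graphs along small cliques, refined by Mader) is entirely absent from your proposal, so there is no route from what you wrote to the stated theorem.
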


Hence $G$ is $7$-connected, and thus is not a $(K_{2,2,2,2}, K_6, 4)$-cockade.
Thus we can deduce the following corollary of these two theorems.
\begin{cor}
$G$ has less than $\frac{9}{2} n - 12$ edges.
\label{cor:jakob7}
\end{cor}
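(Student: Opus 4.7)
The plan is to obtain the corollary as the contrapositive of Theorem~\ref{th:jacokk7}: provided $G$ has at least $7$ vertices, has no $K_7^-$-minor, and is not a $(K_{2,2,2,2}, K_6, 4)$-cockade, Jakobsen's theorem forces $|E(G)| < \tfrac{9}{2}n - 12$. Two of these three hypotheses are immediate from the setup, so only the exclusion of the cockade case needs genuine work.

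First I would check the easy hypotheses. Since $\chi(G) = 8$ we have $n \geq 8 \geq 7$, and $G$ is $K_7^-$-minor free by the choice of $G$. Moreover $\chi(G) = 8$ also forces $G \neq K_7$, so Theorem~\ref{th:maderconnected} applies and gives that $G$ is $7$-connected, exactly as stated in the paragraph preceding the corollary.

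The main step is to argue that a $7$-connected graph cannot be a non-trivial $(K_{2,2,2,2}, K_6, 4)$-cockade. A cockade built from more than one piece is assembled by identifying $K_4$-subgraphs of the building blocks $K_{2,2,2,2}$ and $K_6$; the vertex set of any such identification $K_4$ is a separator of size $4$, which is incompatible with $7$-connectivity. It then remains to rule out the degenerate case in which $G$ consists of a single copy of $K_{2,2,2,2}$ or $K_6$, but this is immediate from the chromatic number since $\chi(K_{2,2,2,2}) = 4$ and $\chi(K_6) = 6$, whereas $\chi(G) = 8$. Combining these observations rules out the cockade alternative of Theorem~\ref{th:jacokk7}, and the edge bound follows at once.

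The only mildly delicate point I expect is a clean appeal to the precise cockade definition to justify the $4$-separator argument, i.e., to confirm that in any non-trivial cockade one can always exhibit a gluing $K_4$ whose vertex set separates the graph. Everything else is bookkeeping once Jakobsen's and Mader's theorems are in hand.
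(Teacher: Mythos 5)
Your proposal is correct and follows exactly the paper's route: the paper deduces the corollary from Jakobsen's theorem by noting that $G$ is $7$-connected (via Mader's theorem) and hence not a $(K_{2,2,2,2}, K_6, 4)$-cockade. You simply spell out the cockade exclusion (the $4$-separator in a non-trivial cockade and the chromatic-number argument for a single block) in more detail than the paper does.
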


We also need the following folklore lemma (see \cite{ag1} for a proof).
\begin{lem}[Folklore]
In a $8$-chromatic critical graph $G$, $G$ has minimum degree at least $7$ and for
any vertex $u$ of degree $7$ (resp. $8$), then the graph induced by $N(u)$ has no stable
of size $2$ (resp. $3$).
\label{lem:nostables}
\end{lem}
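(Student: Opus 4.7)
Both parts of the lemma can be handled together by a single contraction argument that leverages the minimality of $G$: any proper minor of $G$ is still $K_7^-$-minor free and has strictly fewer vertices than $G$, hence is $7$-colorable. For the minimum degree assertion, if some vertex $u$ had degree at most $6$, then $G - u$ would be $7$-colorable by this minimality, and since the at most six neighbors of $u$ occupy at most six colors, some seventh color would remain free for $u$; extending would give a $7$-coloring of $G$, contradicting $\chi(G) = 8$.

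For the bound on stable sets in $N(u)$, let $u$ have degree $d \in \{7,8\}$ and suppose $S \subseteq N(u)$ is an independent set of size $s$ with $s \geq d - 5$ (that is, $s \geq 2$ when $d = 7$ and $s \geq 3$ when $d = 8$). Since $u$ is adjacent to every vertex of $S$, the subgraph induced by $S \cup \{u\}$ is a star, hence connected. Contracting it to a single vertex $w$ yields a graph $G^{\star}$ that is a minor of $G$ on $|V(G)| - s$ vertices, so $K_7^-$-minor free and therefore $7$-colorable by minimality. Taking such a $7$-coloring $c^{\star}$ and setting $\alpha = c^{\star}(w)$, I would colour $G$ by assigning colour $\alpha$ to every vertex of $S$ and keeping $c^{\star}$ on $V(G) \setminus (S \cup \{u\})$. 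This is proper on $G - u$: $S$ is independent, and every neighbor of $S$ outside $S \cup \{u\}$ is a neighbor of $w$ in $G^{\star}$, hence already avoids $\alpha$. The neighborhood of $u$ now uses at most $(d-s)+1$ colours --- the $d-s$ neighbors outside $S$, together with the single colour $\alpha$ carried by $S$ --- and the hypothesis $s \geq d - 5$ forces $(d-s)+1 \leq 6$, leaving a free colour for $u$. Extending produces a $7$-coloring of $G$ and contradicts $\chi(G) = 8$.

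The step I expect to be the most delicate is recognising that the piece we contract must be connected: contracting $S$ alone is not a legitimate minor operation, since $S$ is an independent set, so $u$ has to be absorbed into the contraction. Folding $u$ into the contracted vertex costs nothing in the subsequent counting because $u$ is precisely the vertex whose colour we wish to free; the threshold $s \geq d - 5$ then emerges exactly as the condition under which the neighborhood of $u$ uses only six colours and so leaves a seventh one available, yielding simultaneously the $d=7$ and $d=8$ cases of the stability bound.
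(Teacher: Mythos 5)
The paper gives no proof of this lemma (it defers to \cite{ag1}), but your argument is the standard folklore one and it is correct: contracting the star on $u$ together with an independent set $S$ of $d-5$ of its neighbours, lifting a $7$-colouring of the resulting proper minor, and reusing the colour of the contracted vertex on all of $S$ leaves at most $6$ colours on $N(u)$ and hence a free colour for $u$. One cosmetic remark: all you actually need is that every proper minor of an $8$-chromatic critical graph is $7$-colourable (the definition of critical used in the paper), rather than $K_7^-$-minor-freeness combined with minimality of the counterexample, although in the paper's setting the two justifications coincide.
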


In particular, this lemma implies that $G$ has minimum degree at least $8$ because
if $G$ contains a vertex $u$ of degree $7$ then $N(u)$ has no stable set
of size $2$ and thus $G$ contains a $K_7$-minor, a contradiction.
We will use vertices of degree $8$ and their neighborhoods to find a $K_7^-$-minor.
The following lemma ensures the existence of such vertices.

\begin{lem}
$G$ has at least $25$ vertices of degree $8$.
\label{lem:smalldegree}
\end{lem}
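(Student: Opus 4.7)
The plan is to argue by a straightforward degree-counting contradiction, using Corollary~\ref{cor:jakob7} as the edge upper bound and the minimum-degree bound already extracted from Lemma~\ref{lem:nostables}.

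First I would recall that the discussion following Lemma~\ref{lem:nostables} establishes $\delta(G) \geq 8$: a vertex of degree~$7$ would, by the no-stable-set-of-size-$2$ condition, have a clique neighborhood, and contracting $u$ to any neighbor would already yield $K_7$ (a fortiori $K_7^-$). So every vertex of $G$ has degree at least $8$.

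Let $n_8$ denote the number of degree-$8$ vertices of $G$, and assume for contradiction that $n_8 \leq 24$. Every other vertex has degree at least $9$, so the handshake lemma gives
\[
2|E(G)| \;=\; \sum_{v \in V(G)} \deg(v) \;\geq\; 8\, n_8 + 9(n - n_8) \;=\; 9n - n_8 \;\geq\; 9n - 24,
\]
that is, $|E(G)| \geq \tfrac{9}{2}n - 12$. This directly contradicts Corollary~\ref{cor:jakob7}, which asserts $|E(G)| < \tfrac{9}{2}n - 12$. Hence $n_8 \geq 25$.

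There is no real obstacle here: the content is entirely in the earlier results (Jakobsen's edge bound via Theorem~\ref{th:jacokk7}, combined with Mader's $7$-connectivity to rule out the cockade case, and the folklore minimum-degree observation). The only thing to be a bit careful with is to check that the strict inequality in the corollary is what forces $n_8 > 24$ rather than $n_8 \geq 24$, which is why the bound is exactly $25$.
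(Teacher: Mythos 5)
Your proof is correct and is essentially identical to the paper's: both assume at most $24$ vertices of degree $8$, use $\delta(G)\geq 9$ for the remaining vertices, and derive $|E(G)| \geq \tfrac{9}{2}n - 12$, contradicting Corollary~\ref{cor:jakob7}. No differences worth noting.
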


\begin{proof}
By Corollary \ref{cor:jakob7}, $G$ has less than $\frac{9}{2} n - 12$ edges.
Suppose that $G$ has at most $24$ vertices of degree $8$. By Lemma \ref{lem:nostables},
$G$ has no vertices of degree strictly less than $8$, so we have that :
\[|E(G)| \geq \frac{9(n - 24) + 8 * 24}{2} = \frac{9}{2} n - 12 ,\]
a contradiction.
\end{proof}

\begin{lem}
Let $u$ be a vertex of degree $8$, then either $N(u)$ contains $K_4$ as a subgraph
or $N(u)$ contains the graph $C^{1,2}_{8}$, i.e. the circulant graph on $8$ vertices with jumps $1,2$
(see Figure \ref{fig:neighborhoods}), as a subgraph.
\label{lem:neighborhoods}
\end{lem}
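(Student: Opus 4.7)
The plan is to let $H := G[N(u)]$, the induced subgraph on the $8$ neighbors of $u$. By Lemma~\ref{lem:nostables}, $\alpha(H) \leq 2$. If $\omega(H) \geq 4$ we obtain the desired $K_4$ in $N(u)$, so we may assume $\omega(H) \leq 3$ and must show that $H$ contains $C_8^{1,2}$ as a subgraph.

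First, I would derive tight degree bounds. For any $v \in V(H)$, the set $V(H) \setminus N_H[v]$ is a clique --- two non-adjacent non-neighbors of $v$ would, together with $v$, form an independent set of size $3$ --- so $\omega(H) \leq 3$ gives $\deg_H(v) \geq 4$. Dually, $H[N_H(v)]$ is triangle-free (a triangle in $N_H(v)$ together with $v$ would be a $K_4$) and has independence number at most $2$; by $R(3,3)=6$ it has at most $5$ vertices, so $\deg_H(v) \leq 5$. A direct enumeration then identifies the possible induced neighborhoods: $H[N_H(v)] \cong C_5$ when $\deg_H(v)=5$, and $H[N_H(v)] \in \{C_4, P_4, 2K_2\}$ when $\deg_H(v)=4$.

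Next, I would work in the complement. The graph $\overline{H}$ is triangle-free on $8$ vertices with maximum degree at most $3$ and $\alpha(\overline{H}) \leq 3$; the desired conclusion $H \supseteq C_8^{1,2}$ is equivalent to showing that $\overline{H}$ is a subgraph of the M\"obius ladder $M_4 \cong C_8^{3,4}$. The first subcase is $\overline{H}$ cubic: the classification of $3$-regular triangle-free graphs on $8$ vertices consists of $Q_3$, $K_{4,4}$ minus a perfect matching, and $M_4$, and since the first two are bipartite with $\alpha = 4$ the constraint $\alpha \leq 3$ forces $\overline{H} \cong M_4$. If instead some vertex of $\overline{H}$ has degree strictly less than $3$, I would extend $\overline{H}$ greedily by adding edges between vertices of degree at most $2$ having no common neighbor in $\overline{H}$, producing a cubic triangle-free graph with $\alpha \leq 3$, which by the previous subcase is $M_4$; hence $\overline{H} \subseteq M_4$.

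The main obstacle will be showing that this greedy completion never gets stuck. I expect a short case analysis on the low-degree vertices of $\overline{H}$, using that $\alpha(\overline{H}) \leq 3$ forces enough incident edges while triangle-freeness keeps common neighborhoods small enough to permit the extension. Alternatively, one can sidestep the complement altogether and construct $C_8^{1,2}$ in $H$ directly: starting from a vertex $v$ with one of the identified local neighborhoods ($C_5$, $C_4$, $P_4$, or $2K_2$), one builds step by step a Hamiltonian cycle $v_0 v_1 \cdots v_7$ of $H$ together with all the $2$-jump chords $v_i v_{i+2}$, with the degree constraints $4 \leq \deg_H(\cdot) \leq 5$ and the classification of neighborhood types providing enough rigidity at each step.
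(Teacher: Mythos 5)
Your overall strategy is genuinely different from the paper's. You try to prove the lemma \emph{purely locally}, using only $\alpha(G[N(u)])\leq 2$ and $K_4$-freeness, by classifying the complements (triangle-free graphs on $8$ vertices with independence number at most $3$) and showing each embeds in the M\"obius ladder. The paper instead imports global information: it proves $N(u)$ is $4$-connected \emph{and planar}, the latter via the $7$-connectivity of $G$, an outside vertex, and Theorem~\ref{th:triangular}, and then uses planarity (the edge bound $3n-6=18$, and $K_{3,3}$-freeness) to drive its case analysis --- for instance, the $2K_2$ neighborhood type is killed there by exhibiting a $K_{3,3}$-minor. Your local route is in principle viable: the statement you are after is equivalent to the known fact that every triangle-free graph on $8$ vertices with independence number at most $3$ is a spanning subgraph of the Wagner graph, and your preliminary steps (degree bounds $4\leq\deg_H\leq 5$ via $R(3,3)=6$, the list of possible local neighborhoods, the passage to the complement) are all correct.

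The problem is that the entire difficulty of the lemma sits in the one step you explicitly defer. Your cubic case is the easy part; the claim that a non-cubic $\overline{H}$ can always be greedily completed to a cubic triangle-free graph with $\alpha\leq 3$ --- i.e.\ that among the degree-$2$ vertices there always exist two that are non-adjacent and share no neighbor --- is exactly the classification of all (not just regular) Ramsey $(3,4)$-graphs on $8$ vertices, and you give no argument for it. A priori the greedy could get stuck (e.g.\ if the only two degree-$2$ vertices were adjacent, or all pairs of them shared a neighbor), and ruling those configurations out using $\alpha(\overline{H})\leq 3$ and triangle-freeness is a nontrivial case analysis that must actually be carried out; "I expect a short case analysis" is not a proof of the lemma's core. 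Two secondary issues: your list of cubic triangle-free graphs on $8$ vertices contains a repetition ($Q_3$ \emph{is} $K_{4,4}$ minus a perfect matching --- pair antipodal vertices of the cube --- so the list has two members, not three), and that classification is itself asserted without justification. As written, the proposal proves the routine reductions but not the lemma.
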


\begin{figure}[h]
\centering
\includegraphics[scale=1]{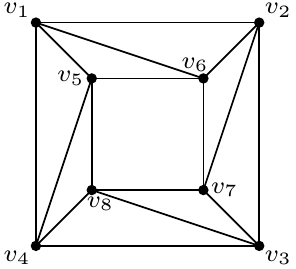}
\caption{The graph $C^{1,2}_{8}$}
\label{fig:neighborhoods}
\end{figure}

Before proving Lemma \ref{lem:neighborhoods}, let us introduce some material.
The following lemma can be immediatly deduced from the four-color theorem.
\begin{lem}
Let $x$, $y$ and $z$ be three vertices of $G$, then $G - \{x,y,z\}$ is $4$-connected and non-planar.
\label{lem:nonplanar}
\end{lem}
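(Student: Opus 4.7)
The plan is to handle the two conclusions separately, each by a short contradiction argument.

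For $4$-connectivity, I would suppose for contradiction that $G - \{x,y,z\}$ has a vertex cut $S$ with $|S| \leq 3$. Then $G - (\{x,y,z\} \cup S)$ is disconnected, so $\{x,y,z\} \cup S$ is a separator of $G$ of size at most $6$. Since $G$ is $7$-connected by Theorem \ref{th:maderconnected} (and is not isomorphic to $K_7$, as it has at least $25$ vertices by Lemma \ref{lem:smalldegree}), this contradicts the connectivity of $G$. I should mention that the resulting disconnection is genuine because $G$ has enough vertices left over after removing six vertices to exhibit two nonempty components.

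For non-planarity, I would invoke the Four Color Theorem directly, as the hint suggests. Suppose $G - \{x,y,z\}$ is planar. Then by the Four Color Theorem it admits a proper $4$-coloring, using colors $\{1,2,3,4\}$. Extending this coloring by assigning the three fresh colors $5$, $6$, $7$ to $x$, $y$, $z$ respectively gives a proper $7$-coloring of $G$. This contradicts the assumption that $G$ is $8$-chromatic.

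Neither step should pose a real obstacle: both are clean inequality arguments. The only subtlety, which is worth mentioning explicitly, is confirming that $G$ really is $7$-connected (not just $K_7$), which follows from Mader's theorem combined with the fact that $G$ has at least $25$ vertices. Once this is noted, both parts of the conclusion are immediate.
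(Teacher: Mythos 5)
Your proposal is correct and follows essentially the same route as the paper: $4$-connectivity falls out of the $7$-connectivity of $G$ (guaranteed by Mader's theorem), and non-planarity follows by extending a Four Color Theorem coloring of $G - \{x,y,z\}$ with three fresh colors to get a $7$-coloring of $G$, contradicting $8$-chromaticity. The extra details you supply (that $G$ is not $K_7$, and that enough vertices remain after deleting six) are fine but not needed beyond what the paper already records.
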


\begin{proof}
The first part of the lemma is obvious by the $7$-connectivity of $G$. Suppose now that there exists
$x,y,z \in V(G)$ such that $G - \{x,y,z\}$ is planar. By the Four Color Theorem, if $G - \{x,y,z\}$ is
planar then it is $4$-colorable, thus $G$ is $7$-colorable, a contradiction.
\end{proof}

We need the following definition and theorem introduced by
Robertson, Seymour and Thomas in \cite{rst1}.
\begin{defn}
Let $H$ be a graph and $T = \{v_1,v_2,v_3\}$ be a triangle. $H$ is said triangular with respect to $T$
if one of the following holds.
\begin{itemize}
\item For some $i$ $(1 \leq i \leq 3)$, $H \setminus \{v_i\}$ has maximum valency at most $2$, and either
$H \setminus \{v_i\}$ is a circuit or it has no circuit.
\item All vertices of $H$ have valency at most $3$, there is at most one 3-valent vertex $v \neq v_1,v_2,v_3$,
and $H\setminus \{v_1, v_2, v_3\}$ has no circuit.
\item All vertices of $H$ have valency at most $3$, there is a triangle $C$ with $v_1,v_2,v_3\not\in V(C)$,
every 3-valent vertex of $H$ is in $\{v_1,v_2,v_3\} \cup V(C)$, and every circuit of $H$
except these two triangles meets both $\{v_1,v_2,v_3\}$ and $V(C)$.
\end{itemize}
\end{defn}

\begin{thm}[Robertson, Seymour \& Thomas, 1993, \cite{rst1}]
Let $v_1,v_2,v_3$ be a triangle $T$ in a $4$-connected non-planar graph $H$. Let $Z$ be an induced
subgraph of $H$ such that $v_1,v_2,v_3 \in Z$ and $Z$ is not triangular with respect to $T$. Then $H$
has a $K_5$-minor $(v_1,v_2,v_3,Z_1,Z_2)$ in $H$ such that $Z_1 \cap Z, Z_2 \cap Z \neq \emptyset$.
\label{th:triangular}
\end{thm}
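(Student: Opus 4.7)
The plan is to prove the contrapositive: assuming $H$ contains no $K_5$-minor $(v_1,v_2,v_3,Z_1,Z_2)$ with $Z_1\cap Z,Z_2\cap Z\neq\emptyset$, I will show that $Z$ must be triangular with respect to $T$. Since three of the five branch sets are forced to be the singletons $\{v_i\}$, the target minor is equivalent to finding two disjoint connected subgraphs $Z_1,Z_2$ of $H-T$ such that (i) each meets a neighbor of every $v_i$, (ii) each meets $Z\setminus T$, and (iii) some edge of $H$ joins $Z_1$ to $Z_2$. The $4$-connectivity of $H$ gives $|N(v_i)|\geq 4$, makes $H-T$ connected, and supplies a rich supply of internally disjoint paths between neighbors of the $v_i$.

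The first step is to harvest a $K_5$-minor from non-planarity: by Wagner's theorem a $4$-connected non-planar graph has a $K_5$-minor, and by classical path-rerouting arguments one can prescribe that some branch sets lie in chosen locations. I would start from such a minor, use the edges of $T$ together with internally disjoint $v_iv_j$-paths in $H-T$ to collapse three of its branch sets onto $v_1,v_2,v_3$, and then try to modify the two remaining branch sets so that each meets $Z\setminus T$, using detours through $Z$ provided by $4$-connectivity whenever $Z\setminus T$ offers enough routing room.

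The hard part is the case analysis showing that if every such rerouting gets stuck, $Z$ falls into one of the three thin shapes of the definition. The key structural argument is that a vertex of $Z\setminus T$ of $Z$-valency $\geq 4$, or a pair of suitably placed vertex-disjoint circuits in $Z$, always enables the construction of $Z_1$ and $Z_2$ via Menger-type routing in $H$ combined with the non-planarity of $H$. Ruling these out leaves exactly the three admissible configurations: (a) $Z\setminus\{v_i\}$ a path, a circuit, or acyclic with maximum valency $\leq 2$; (b) sub-cubic $Z$ with at most one additional $3$-valent vertex and no circuit outside $T$; or (c) sub-cubic $Z$ with exactly one additional triangle $C$ through which every non-trivial circuit other than $T$ and $C$ must pass.

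The principal obstacle is the combinatorial bookkeeping in this last step: each non-triangular configuration of $Z$ must be matched with an explicit construction of $Z_1,Z_2$, and the constructions branch on the $3$-cut structure of $Z$ separating $T$ from the rest of $Z$. This is the technical core of the Robertson-Seymour-Thomas paper and I expect no shortcut; my proposal reduces the theorem to this exhaustive but bounded case analysis, but carrying it out rigorously is where the real work lies.
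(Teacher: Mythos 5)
This statement is not proved in the paper at all; it is imported as a black box from Robertson, Seymour and Thomas's work on Hadwiger's conjecture for $K_6$-minor-free graphs, where its proof is one of the longest and most technical components of that paper. There is therefore no in-paper argument to compare yours against, and the only question is whether your proposal stands on its own. It does not: what you have written is a plan rather than a proof. The entire content of the theorem is the claim that \emph{every} non-triangular $Z$ admits the rooted $K_5$-minor with both new branch sets meeting $Z$, and your proposal reduces this to an ``exhaustive but bounded case analysis'' that you explicitly decline to carry out. The preparatory steps you do describe (Wagner's theorem yielding some $K_5$-minor in a $4$-connected non-planar graph, collapsing three branch sets onto $v_1,v_2,v_3$, Menger-type detours through $Z$) are plausible ingredients, but none of them touches the actual difficulty, which is to show that the three configurations in the definition of ``triangular'' are \emph{exactly} the obstructions.

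Moreover, the one concrete structural claim you do make --- that a vertex of $Z$-valency at least $4$ outside $T$, or a pair of suitably placed disjoint circuits, always enables the construction --- is both unproved and insufficient as a dichotomy. For example, a sub-cubic $Z$ that is acyclic outside $T$ but has two $3$-valent vertices outside $T$ not lying on a common triangle and not simultaneously killable by deleting a single $v_i$ fails all three clauses of the definition, hence is non-triangular, yet exhibits neither of the patterns your key claim rules out; so even granting that claim, the contrapositive would not be established. The gap, concretely, is that the theorem's substance lives entirely in the case analysis you defer. This is a deep result, and the right course in the present paper is the one its author takes: cite it rather than sketch a proof whose core is missing.
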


Let us now prove Lemma \ref{lem:neighborhoods}.

\begin{proof}
Let $u$ be a vertex of degree $8$ of $G$ and suppose that the graph induced by $N(u)$ is $K_4$-free.

\begin{claim}
$N(u)$ is $4$-connected.
\label{claim:4connected}
\end{claim}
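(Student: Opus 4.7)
The plan is to assume for contradiction that $N(u)$ admits a vertex cut $S$ with $|S| \leq 3$, and derive a contradiction from the two structural constraints on $N(u)$: it is $K_4$-free by the hypothesis of Lemma \ref{lem:neighborhoods}, and by Lemma \ref{lem:nostables} it contains no independent set of size $3$.

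First I would determine which cut configurations are actually compatible with these constraints. Write $A$ for the vertex set of one component of $N(u) - S$ and $B$ for the union of the remaining components, so that there are no edges between $A$ and $B$. Any independent set of $A \cup B$ decomposes into an independent set of $A$ plus one of $B$, which gives $\alpha(A) + \alpha(B) \leq \alpha(N(u)) \leq 2$. In particular $B$ must itself be connected (otherwise $\alpha(B) \geq 2$ forces $\alpha(A) \leq 0$, impossible), and both $A$ and $B$ are cliques. Combined with $K_4$-freeness (so $|A|, |B| \leq 3$) and $|A| + |B| = 8 - |S| \geq 5$, the only surviving configurations are $|S| = 2$ with $A, B$ both triangles, or $|S| = 3$ with $A$ an edge and $B$ a triangle.

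The key adjacency observation is that every $s \in S$ is adjacent either to all of $A$ or to all of $B$: if $s$ missed some $a \in A$ and some $b \in B$, then $\{s, a, b\}$ would be an independent set of size $3$ in $N(u)$ (using $a \not\sim b$), contradicting Lemma \ref{lem:nostables}.

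Finally I would rule out both configurations. In the case $|S| = 2$, either alternative of the previous paragraph joins $s$ to a full triangle, producing a $K_4$ in $N(u)$, a contradiction. In the case $|S| = 3$, the same $K_4$ obstruction forbids any $s_i$ from being fully joined to the triangle $B$, so every $s_i$ must be adjacent to both endpoints of the edge $A = \{a_1, a_2\}$; then any edge $s_i s_j$ in $S$ would create a $K_4$ on $\{s_i, s_j, a_1, a_2\}$, forcing $S$ to be an independent set, and then $S$ itself is an independent set of size $3$ in $N(u)$, contradicting Lemma \ref{lem:nostables} once more. I do not foresee any genuine obstacle beyond this case analysis, since the clique and independence constraints on $N(u)$ already reduce the possible cut structures to essentially nothing.
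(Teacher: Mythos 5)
Your proof is correct and follows essentially the same route as the paper: both sides of any small cut are forced to be cliques by the absence of a stable set of size $3$, hence have at most $3$ vertices by $K_4$-freeness, and the remaining configurations ($|S|=2$ with two triangles, $|S|=3$ with an edge and a triangle) are eliminated by the observation that each cut vertex must be complete to one side. Your version is a slightly cleaner packaging of the paper's case analysis on the order of the separation, but the ideas coincide.
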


\begin{proof}
Let $(A,B)$ be a minimal separation of $N(u)$.
Since there is no stable of size $3$ in $N(u)$ by Lemma \ref{lem:nostables}, for each pair of vertices of $v,v' \in A \setminus B$ and any
vertex in $w \in B \setminus A$, $\{v,v',w\}$ contain at least one edge. This edge cannot be $vw$ or $v'w$ because $(A,B)$ is a separation of $N(u)$.
So this must be the edge $vv'$. We deduce that both $A \setminus B$ and $B \setminus A$ are complete graphs.

If $(A,B)$ is a separation of order $1$, then either $|A \setminus B| \geq 4$ or $|B \setminus A| \geq 4$. By the previous
remark, $N(u)$ contains a $K_4$ subgraph, a contradiction.

Let suppose that $(A,B)$ is a separation of order $2$, then in this case $|A \setminus B| = |B \setminus A| = 3$.
Let $v \in A \cap B$. Since the graph induced by $N(u)$ is $K_4$-free and since $A \setminus B$ and $B \setminus A$
are triangles, there is one vertex $w \in A \setminus B$ such that $vw$ is not an edge. In the same way, there is a vertex
$w' \in B \setminus A$ such that $vw'$ is not an edge. Since $(A,B)$ is a separation of $N(u)$, then $\{v,w,w'\}$ is a
stable set of size $3$, a contradiction.

Let now suppose that $(A,B)$ is a separation of order $3$. By the previous remark, $|B \setminus A| \leq 3$ and $|A \setminus B| \leq 3$.
Since $|N(u)| = 8$ and $|A \cap B| = 3$, we can assume without loss of generality that $|A \setminus B| = 3$ and $|B \setminus A| = 2$.
Let $A \cap B = \{s_1,s_2,s_3\}$ and let $B \setminus A = \{b_1,b_2\}$. Suppose that there is a vertex $s_i$, $1 \leq i \leq 3$
and a vertex $b_j$, $1 \leq j \leq 2$, such that $s_i b_j$ is not an edge, then since $N(u)$ has no stable set of size $3$, $s_i$
is adjacent to all the vertices of the triangle $A \setminus B$ but then $N(u)$ contains a $K_4$-subgraph, a contradiction.
Thus we can assume that $b_1$ and $b_2$ are adjacent to all the vertices of $A \cap B$.

Now since $N(u)$ is $K_4$-free, $A \cap B$ is a stable set because if say $s_i s_j$ are adjacent for $1 \leq i < j \leq 3$ then
$\{b_1,b_2,s_i,s_j\}$ would be a $K_4$-subgraph, a contradiction. But then $A \cap B$ is a stable set of size $3$,
a contradiction.
\end{proof}

\begin{claim}
$N(u)$ is planar.
\label{claim:planar}
\end{claim}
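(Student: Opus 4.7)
My plan is to argue by contradiction: suppose $N(u)$ is non-planar and deduce a $K_7^-$-minor of $G$. First, by Lemma \ref{lem:nostables} $N(u)$ contains no independent set of size $3$, and since $|N(u)| = 8 \geq R(3,3) = 6$, Ramsey's theorem provides a triangle $T = \{v_1, v_2, v_3\}$ in $N(u)$. Combined with Claim \ref{claim:4connected}, $N(u)$ is now $4$-connected and non-planar, so the hypotheses of Theorem \ref{th:triangular} are met with $H = N(u)$ and $Z = N(u)$ itself. Every clause of the definition of \emph{triangular with respect to $T$} requires most vertices of $H$ to have valency at most $3$ (or even at most $2$ after removing a vertex, in the first clause), which is incompatible with the minimum valency $4$ inherited from the $4$-connectivity of $N(u)$. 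So $Z = N(u)$ is not triangular with respect to $T$, and Theorem \ref{th:triangular} yields a $K_5$-minor $(v_1, v_2, v_3, Z_1, Z_2)$ inside $N(u)$. Adjoining $\{u\}$ to these five branch sets produces a $K_6$-minor of $G$ with branch sets $\{u\}, \{v_1\}, \{v_2\}, \{v_3\}, Z_1, Z_2$, since $u$ is adjacent to every vertex of $N(u)$.

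It remains to upgrade this $K_6$-minor to a $K_7^-$-minor by producing a seventh connected branch set $U_7$, disjoint from the first six and adjacent to at least five of them. By Lemma \ref{lem:smalldegree}, pick $x \in V(G) \setminus (N(u) \cup \{u\})$. I would then split into two cases. If there exists a vertex $w \in N(u) \setminus (\{v_1,v_2,v_3\} \cup Z_1 \cup Z_2)$, apply the fan lemma in the $6$-connected graph $G \setminus \{u\}$ to obtain six internally disjoint paths from $w$ to six distinct vertices of $N(u) \setminus \{w\}$, and let $U_7$ consist of $w$ together with the path interiors: then $U_7$ is adjacent to $\{u\}$ (through $w$) and to at least four of the five $N(u)$-branch-sets, giving five adjacencies in total. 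Otherwise $N(u) = \{v_1,v_2,v_3\} \cup Z_1 \cup Z_2$, and I would build $U_7$ around $x$ using the fan lemma in $G \setminus \{u\}$, choosing six target vertices of $N(u)$ so that every one of the five $N(u)$-branch-sets is represented among the endpoints; then $U_7 = \{x\} \cup (\text{interiors})$ is adjacent to all five $N(u)$-branch-sets, again five adjacencies.

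The main obstacle is the second case: the fan lemma provides internally disjoint paths but does not by itself guarantee that the interiors avoid $N(u)$, and when one of $|Z_1|, |Z_2|$ equals $1$ the unique vertex of that $Z_i$ cannot be among the two unvisited vertices without sacrificing an adjacency. I would handle this delicate subcase by either re-choosing the triangle $T$ among the many triangles forced in $N(u)$ by the no-independent-set-of-three condition, or by working in $G \setminus \{u, s\}$ for a well-chosen $s \in N(u)$; this graph remains $5$-connected thanks to the $7$-connectivity of $G$, and a refined fan there (supplemented by short detours through the dense adjacency inside $N(u)$) produces a $U_7$ realising the required five adjacencies. In every configuration we thus obtain the forbidden $K_7^-$-minor, contradicting the choice of $G$.
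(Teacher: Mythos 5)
Your overall strategy (Ramsey triangle, Theorem \ref{th:triangular} applied to the $4$-connected non-planar $N(u)$ to get a rooted $K_5$-minor, adjoin $u$ for a $K_6$-minor, then hunt for a seventh branch set) matches the second half of the paper's argument, and your observation that $4$-connectivity rules out every clause of ``triangular'' is correct. But the proof is not complete: the entire difficulty of the claim lives in the construction of the seventh branch set $U_7$, and that is exactly the part you leave as ``I would handle this delicate subcase by either \ldots or \ldots''. A fan from a vertex $w \in N(u)$ (or from an external $x$) in $G \setminus \{u\}$ gives internally disjoint paths, but nothing prevents their interiors from running through $Z_1$, $Z_2$ or the $v_i$, which destroys the disjointness of the branch sets; and your fallback suggestions (re-choosing the triangle, deleting a further vertex $s$, ``short detours'') are not arguments. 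As it stands this is a proof outline with the hard step missing.

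Two observations would have closed the gap, and they are how the paper does it. First, the subcase you worry about, $|Z_1| = 1$ or $|Z_2| = 1$, cannot occur: you are inside the proof of Lemma \ref{lem:neighborhoods}, where $N(u)$ is assumed $K_4$-free, and a singleton $Z_i$ together with $v_1, v_2, v_3$ would be a $K_4$; so $|Z_1|, |Z_2| \geq 2$ automatically. Second, instead of a fan rooted at a vertex of $N(u)$, take $w \notin N[u]$ and apply Menger between $w$ and $u$ in the $7$-connected $G$: you get $7$ internally disjoint $u$--$w$ paths, which after shortening meet $N(u)$ in exactly one vertex each, so their interiors minus $N[u]$ form a single connected set $U_7$ (glued at $w$) that is automatically disjoint from everything inside $N(u)$. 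At most one vertex $v$ of $N(u)$ is missed by these paths; choose the Ramsey triangle inside $N(u) \setminus \{v\}$, and use $|Z_1|, |Z_2| \geq 2$ to see that each $Z_i$ also contains a path endpoint. Then $U_7$ is adjacent to all five $N(u)$-branch sets, and only the edge to $u$ may be missing, giving the $K_7^-$-minor. (The paper also disposes of the easier situation where $8$ disjoint $u$--$w$ paths exist by quoting Wagner's theorem directly, without needing Theorem \ref{th:triangular} at all.)
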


\begin{proof}
Assume that $N(u)$ is non-planar. Since $N(u)$ is $4$-connected by Claim~\ref{claim:4connected},
then $N(u)$ contains a $K_5$-minor by Wagner's theorem \cite{wagner1}. Since $G$ is not
isomorphic to $N[u] = \{u\} \cup N(u)$ as it contain at least $25$ vertices, then we can find $w \in G \setminus N[u]$.
By the $7$-connectivity of $G$, there is $7$-vertex disjoint paths between $w$ and $u$.
Let denote them by $P_1,P_2,\ldots,P_7$.
We can always assume that these paths are minimal in length and thus that these paths
intersect $N(u)$ in at most one vertex.
If there is $8$ vertex-disjoint paths between $u$ and $w$, then there exists $8$ vertex-disjoint paths
between $w$ and every vertex of $N(u)$. Since $N(u)$ contains a $K_5$-minor, then $N(u)$, together with
$u$, $w$ and the $8$ paths between $N(u)$ and $w$, contains a $K_7^-$-minor, a contradiction.

So now, let $v$ be the only vertex of $N(u)$ which is not contained in any of the $7$ paths between $w$ and $N(u)$.
By Ramsey's theorem, since $N(u) \setminus \{v\}$ has $7$ vertices and no stable set of size $3$, then it contains a triangle.
Denote by $v_1$, $v_2$ and $v_3$ its vertices. Since $N(u)$ is $4$-connected,
$N(u)$ is not triangular with respect to $\{v_1,v_2,v_3\}$, and since it is $4$-connected and non-planar, then by
Theorem \ref{th:triangular}, there exists $Z_1$ and $Z_2$ such that $(v_1,v_2,v_3,Z_1,Z_2)$ is a $K_5$-minor. Since $N(u)$ does not
contain any $K_4$ subgraphs, then $|Z_1|,|Z_2| \geq 2$, so both sets $Z_1$ and $Z_2$ intersect at least one of the $7$ paths $P_i$, $1 \leq i \leq 7$.
Thus $(v_1,v_2,v_3,Z_1,Z_2, u, \underset{{1 \leq i \leq 7}}{\bigcup} (V(P_i) \setminus N[u]))$ is a $K_7^-$-minor, a contradiction.
\end{proof}

\begin{claim}
$N(u)$ does not contain any vertex of degree $6$ or greater in $G[N(u)]$.
\end{claim}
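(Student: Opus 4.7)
The plan is to suppose for contradiction that some vertex $v\in N(u)$ has at least $6$ neighbors inside $G[N(u)]$, and then extract a contradiction from Ramsey's theorem applied to these neighbors.

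Let $S$ be the set of neighbors of $v$ in $G[N(u)]$, so $|S|\geq 6$. First I would check that $S$ contains no stable set of size $3$: indeed $S\subseteq N(u)$ and $u$ has degree $8$, so by Lemma \ref{lem:nostables} the graph $G[N(u)]$ (and a fortiori any subset of it) contains no stable set of size $3$. Second, I would show that $S$ contains no triangle. This is where the $K_4$-freeness hypothesis on $G[N(u)]$ is used: if $\{x,y,z\}\subseteq S$ induced a triangle, then $\{v,x,y,z\}$ would induce a $K_4$ in $G[N(u)]$, contradicting the standing assumption of the proof of Lemma \ref{lem:neighborhoods}.

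Combining these two observations, the induced subgraph $G[S]$ on at least $6$ vertices contains neither a triangle nor an independent set of size $3$. This contradicts the Ramsey bound $R(3,3)=6$, which asserts that any graph on $6$ vertices contains a triangle or a stable set of size $3$. Hence no such vertex $v$ exists.

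There isn't really a substantial obstacle here: both constraints on $S$ follow immediately from the two hypotheses already in force (no stable set of size $3$ in $N(u)$, and $K_4$-freeness of $G[N(u)]$), and the pigeonhole is exactly Ramsey $R(3,3)=6$. The only thing to be a little careful about is to invoke Lemma \ref{lem:nostables} in the ``degree $8$'' form, which indeed applies since $u$ has degree $8$ by hypothesis.
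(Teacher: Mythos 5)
Your proposal is correct and follows the same argument as the paper: apply Ramsey's bound $R(3,3)=6$ to the $\geq 6$ neighbors of $v$ inside $G[N(u)]$, using the absence of stable sets of size $3$ (Lemma \ref{lem:nostables}) to force a triangle, which together with $v$ yields a forbidden $K_4$. The only difference is presentational — the paper derives the triangle and then the $K_4$, while you frame it as a direct contradiction with Ramsey — but the content is identical.
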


\begin{proof}
Suppose that $N(u)$ contains a vertex $v$ of degree greater or equal than $6$, then the graph induced by
$N(v)$ in $N(u)$ contains no stable set of size $3$, but then by Ramsey's theorem it contains a triangle.
Thus $N(u)$ contains a $K_4$-subgraph, a contradiction.
\end{proof}

\begin{claim}
The neighborhood of any vertex of $N(u)$ is a $4$-path, a $4$-cycle or a $5$-cycle.
\end{claim}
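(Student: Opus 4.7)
The plan is to pin down the possible structures of $N_{G[N(u)]}(v)$ from three constraints accumulated so far: it is triangle-free (since $G[N(u)]$ is $K_4$-free), it has independence number at most $2$ (by Lemma~\ref{lem:nostables}), and it has either $4$ or $5$ vertices (by Claim~\ref{claim:4connected} together with the preceding claim). First I would observe that if $\deg_{G[N(u)]}(v)=5$, then the unique triangle-free graph on $5$ vertices with independence number at most $2$ is $C_5$, by $R(3,3)=6$. If instead $\deg_{G[N(u)]}(v)=4$, the same Ramsey enumeration leaves exactly three possibilities, namely $P_4$, $C_4$, and $2K_2$, so the only real task is to rule out $2K_2$.

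For that step I would assume $N_{G[N(u)]}(v)=\{a,b,c,d\}$ with only the edges $ab$ and $cd$, and set $\{x,y,z\}:=N(u)\setminus\{v,a,b,c,d\}$. For each $w\in\{x,y,z\}$, applying Lemma~\ref{lem:nostables} to the four triples $\{a,c,w\}$, $\{a,d,w\}$, $\{b,c,w\}$, $\{b,d,w\}$ forces $w$ to be adjacent either to both vertices of $\{a,b\}$ or to both vertices of $\{c,d\}$; call these (possibly overlapping) sets $W_1$ and $W_2$, so $\{x,y,z\}=W_1\cup W_2$. The $K_4$-freeness of $G[N(u)]$ prevents any two vertices of $W_1$ from being adjacent (together with $a,b$ they would form a $K_4$), and likewise for $W_2$, so both $W_1$ and $W_2$ are independent in $G[\{x,y,z\}]$. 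On the other hand, any non-adjacent pair inside $\{x,y,z\}$ would, together with $v$, give an independent triple in $N(u)$ (since $v$'s neighbors in $N(u)$ are exactly $\{a,b,c,d\}$), violating Lemma~\ref{lem:nostables}; hence $G[\{x,y,z\}]$ must be a triangle.

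The contradiction then pops out: two independent sets inside a triangle each have size at most $1$, yet their union must be all of $\{x,y,z\}$, which has size $3$. Hence $N_{G[N(u)]}(v)\neq 2K_2$, and the only remaining possibilities are $P_4$, $C_4$, and $C_5$. The delicate part of the plan is exactly this $2K_2$ exclusion, since the other cases dissolve into Ramsey arithmetic; the key ingredient is the simultaneous use of the $K_4$-freeness (to force $W_1$ and $W_2$ independent in $\{x,y,z\}$) and the no-stable-of-$3$ property (to force $\{x,y,z\}$ itself to be a triangle), which together squeeze the counts into an impossibility.
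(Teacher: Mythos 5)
Your proof is correct, and the decisive step --- ruling out $2K_2$ as the neighborhood of a degree-$4$ vertex $v$ --- is handled by a genuinely different and lighter argument than the paper's. Both proofs share the same frame: the Ramsey reduction to $P_4$, $C_4$, $2K_2$ for degree $4$ and to $C_5$ for degree $5$, and the observation that the three non-neighbors $\{x,y,z\}$ of $v$ must induce a triangle (else they give a stable set of size $3$ with $v$). From there the paper counts edges between $\{v_1,\dots,v_4\}$ and $\{x,y,z\}$: each $v_i$ has exactly two neighbors in the triangle (at least two by the minimum degree forced by Claim~\ref{claim:4connected}, at most two by $K_4$-freeness), so among the eight such edges some vertex of $\{x,y,z\}$ collects three, and this yields a $K_{3,3}$-minor contradicting Claim~\ref{claim:planar}. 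You instead show, via the four stable triples $\{a,c,w\},\{a,d,w\},\{b,c,w\},\{b,d,w\}$, that each of $x,y,z$ must dominate one of the two edges $ab$, $cd$ of the $2K_2$, that each of the two resulting classes is independent inside the triangle $\{x,y,z\}$ by $K_4$-freeness and hence has size at most one, and that their union must nevertheless cover all three vertices --- an immediate numerical contradiction. Your route uses only Lemma~\ref{lem:nostables} and $K_4$-freeness, dispensing entirely with the planarity of $N(u)$ and with its $4$-connectivity for this step, so it is more self-contained and would survive even if those earlier claims were weakened; the paper's version, by contrast, leans on the structural facts it has already paid for and fits its global strategy of extracting forbidden minors. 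Both are valid; yours is arguably the cleaner exclusion of $2K_2$.
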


\begin{proof}
Let $v$ be a vertex of degree $4$ in $N(u)$, denote by $v_1$, $v_2$, $v_3$ and $v_4$ its neighbors. Suppose that its neighborhood
is not a path nor a $4$-cycle. Since the neighborhood of $v$ is triangle-free and does not contain a stable set of size $3$, it
must be two disjoint edges, say $v_1v_2$ and $v_3v_4$.
Denote by $x$, $y$ and $z$, the three vertices in $N(u) \setminus \{v,v_1,v_2,v_3,v_4\}$. $\{x,y,z\}$ is a triangle because otherwise
there is stable set of size $3$ with $v$.

Every vertex in $\{v_1,v_2,v_3,v_4\}$ sees exactly two vertices in $\{x,y,z\}$ because, either there would be a vertex of degree at most 
$3$ in $N(u)$, contradicting the $4$-connectivity of $N(u)$, or if one these vertices is adjacent to the three vertices $x$, $y$ and $z$
then $N(u)$ would contain a $K_4$ subgraph, another contradiction. Then as there are $8$ edges between $\{v_1,...,v_4\}$ and $\{x,y,z\}$,
there exists one vertex of degree $5$ in $\{x,y,z\}$ say $x$. By symmetry, we can assume that $x$ is
adjacent to $v_1$, $v_2$ and $v_3$. Now since every vertex in $\{v_1,v_2,v_3,v_4\}$ is adjacent to two vertices
in $\{x,y,z\}$, $v_1$, $v_2$ and $v_3$ are adjacent to either $y$ or $z$. But then $(v_1,v_2,v_3,v,x,\{y,z\})$
is a $K_{3,3}$-minor, contradicting Claim \ref{claim:planar}.

If $v$ is a vertex of degree $5$, then since $N(v)$ does not contain any stable set of size $3$ and any triangle,
then $N(v)$ can only be isomorphic to the cycle of length $5$.
\end{proof}

Since $N(u)$ is planar, it has at most $18$ edges by Mader's theorem, so it contains at least one vertex of degree $4$.
Let $v$ be such a vertex. Denote by $v_1$, $v_2$, $v_3$ and $v_4$ its neighbors and $x$, $y$ and $z$ its $3$ non-neighbors.
Then $C = \{v_1, v_2, v_3, v_4\}$ can induce either a $4$-path or a $4$-cycle.

Suppose that $C$ is a $4$-path.
Now the neighborhood of $v_1$ cannot induces a $4$-cycle or a $5$-cycle because this would contradict that $v$ has degree $4$.
So $v_1$ has degree $4$ and its neighborhood is a $4$-path.
By symmetry we can assume that $v_1$'s neighbors are the $4$-path $v v_2 x y$.
Moreover $\{z,v_3,v_4\}$ is a triangle since otherwise $N(u)$ would contain a stable set of size $3$ with $v_1$.
Now $y$ is adjacent to at least $1$ other vertex in $C$ because it would be of degree $3$ otherwise,
contradicting the $4$-connectivity of $N(u)$.
Planarity forces $y$ to be adjacent to $v_4$, but then $N(u)$ contains $C^{1,2}_8$.

Now suppose that $C$ is the $4$-cycle $v_1 v_2 v_3 v_4$.
Suppose that $v_1$ has degree $4$ and assume that $v_1$'s neighborhood is a $4$-path, say $v_4 v v_2 x$.
Now $\{v_3,y,z\}$ is also a triangle because otherwise there is a stable set of size $3$ with $v_1$.
As $y$ and $z$ have degree at least $4$ in $N(u)$ and as $y,z \not\in N(v) \cup N(v_1)$ then
$y$ and $z$ are both adjacent to at least one vertex in $\{v_2, v_4\}$.
Moreover $y$ and $z$ cannot be both adjacent to the same vertex because otherwise there would
be a $K_4$-subgraph with $v_3$.
So either $y$ is adjacent to $v_2$ and the $z$ is adjacent to $v_4$ either $z$ is
adjacent to $v_2$ and the $y$ is adjacent to $v_4$. In both cases, after removing the edge
$v_2 v_3$, the graph is isomorphic to $C^{1,2}_8$. Note that the same argument applies
when $v_1$'s neighborhood is a $4$-cycle by also removing the edge $v_4 x$ at the end.

Suppose now that $v_1$ has degree $5$ so we can assume that its neighborhood is the $5$-cycle $v_4 v v_2 x y$ .
Then $z$ is adjacent to $v_3$ because otherwise $\{v_1,v_3,z\}$ is a stable set of size $3$.
Since $z$ has degree at least $4$ in $N(u)$ it is also adjacent to at least one vertex in the set $\{v_2,v_4\}$. But if
$z$ is adjacent to $v_2$ then after removing the edge $v_1 v_2$, the graph is isomorphic to $C^{1,2}_8$, and if $z$ is adjacent
to $v_4$ then after removing the edge $v_1 v_4$, the graph is isomorphic to $C^{1,2}_8$.
\end{proof}

\begin{lem}
Let $u$ and $u'$ be two degree $8$ vertices of $G$ such that $N(u)$ and $N(u')$ contain the graph $C^{1,2}_{8}$ as a subgraph,
then $u$ and $u'$ are not adjacents.
\label{lem:noadjacent}
\end{lem}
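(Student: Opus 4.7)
\medskip
\noindent\textbf{Proof plan.} Suppose for contradiction that $uu'$ is an edge in $G$. We will construct a $K_7^-$-minor from the two overlapping copies of $C_8^{1,2}$.

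\medskip
First I would exploit the fact that $u'$ is one of the eight vertices of the $C_8^{1,2}$ sitting inside $G[N(u)]$; since every vertex in $C_8^{1,2}$ has degree $4$, at least four neighbors of $u'$ lie in $N(u)$, so the set of common neighbors $S := N(u) \cap N(u')$ has $|S| \geq 4$. Label the copy in $N(u)$ as $v_0, v_1, \ldots, v_7$ with $u' = v_0$, so that the four common neighbors along $C_8^{1,2}$ are precisely the $4$-path $v_6 v_7 v_1 v_2 \subseteq S$, while the remaining three vertices $v_3, v_4, v_5$ of this copy induce a triangle (this is the structure of $C_8^{1,2}$ at any vertex). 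Applying the same analysis to $N(u')$, with $u$ playing the role of $u'$, I obtain symmetric data: a $4$-path on four vertices of $S$ inside $G[N(u')]$, and a triangle $T'$ on the three remaining vertices of the $C_8^{1,2}$ copy in $N(u')$ (which lie in $N(u') \setminus N(u)$ when $|S| = 4$).

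\medskip
Next I would assemble a $K_7^-$-minor with branch sets
\[
\{u\},\ \{u'\},\ T := \{v_3, v_4, v_5\},\ T',\ \{v_1\},\ \{v_2\},\ \{v_7\}.
\]
Each of $v_3, v_4, v_5$ is adjacent inside $C_8^{1,2}$ to two of $\{v_1, v_2, v_6, v_7\}$, so the contracted branch $T$ is adjacent to every one of $v_1, v_2, v_6, v_7$, in particular to $v_1, v_2, v_7$; symmetrically $T'$ is adjacent to $v_1, v_2, v_7$. The $4$-path $v_6 v_7 v_1 v_2$ supplies the edges $v_7 v_1$ and $v_1 v_2$. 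Together with $u \sim u'$, $u \sim v_i$, $u' \sim v_i$ (for $i \in \{1,2,7\}$), $u \sim T$ and $u' \sim T'$, this yields $17$ of the $\binom{7}{2} - 1 = 20$ pairs required for a $K_7^-$-minor.

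\medskip
The remaining pairs to secure are $u{\sim}T'$, $u'{\sim}T$, $T{\sim}T'$ and $v_2{\sim}v_7$; since $K_7^-$ tolerates one missing pair, three of the four suffice. For the first three I would enlarge $T$ by a path to $T'$ inside $G - \{u, u', v_1, v_2, v_6, v_7\}$, which exists by the $7$-connectivity of $G$, and route it to touch $N(u')$ so that $u' \sim T$ and $T \sim T'$ become genuine. For the edge $v_2 v_7$ (or one of $v_1 v_6$, $v_2 v_6$, which can be substituted by swapping the omitted index), I would invoke the $4$-path forced on $S$ by $C_8^{1,2} \subseteq N(u')$: unless this second $4$-path coincides with $v_6 v_7 v_1 v_2$, it contributes at least one new edge among $\{v_1, v_2, v_6, v_7\}$.

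\medskip
The main obstacle is precisely the degenerate case in which the two $4$-paths on $S$ coincide and $|S| = 4$, leaving only three edges among $\{v_1, v_2, v_6, v_7\}$. In that situation I would rely on the rigidity of the two neighborhoods: by Claim~\ref{claim:planar} both $G[N(u)]$ and $G[N(u')]$ are planar, hence have at most $3\cdot 8 - 6 = 18$ edges, so they differ from $C_8^{1,2}$ by at most two extra edges each. A short case check on these two extra edges either produces the needed additional $v_i v_j$ edge, produces a $K_4$ inside $G[N(u)]$ (so we fall into the other alternative of Lemma~\ref{lem:neighborhoods} and conclude directly), or supplies a second path between $T$ and $T'$, enough to close the $K_7^-$-minor and reach the desired contradiction.
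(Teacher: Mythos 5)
Your overall strategy --- hand-building a $K_7^-$-minor directly from the two circulant neighbourhoods --- is genuinely different from the paper's, but as written it does not close. The core problem is the bookkeeping in your own generic case $|S|=4$: with branch sets $\{u\},\{u'\},T,T',\{v_1\},\{v_2\},\{v_7\}$ the four pairs $uT'$, $u'T$, $TT'$ and $v_2v_7$ are all unrealised, so you must recover at least three, and your mechanisms do not deliver that. A path from $T$ to $T'$ in $G-\{u,u',v_1,v_2,v_6,v_7\}$ does give $T\sim T'$, but you cannot simply ``route it to touch $N(u')$'': Menger gives you \emph{some} path, not one that meets a neighbour of $u'$ before reaching $T'$, so $u'\sim T$ is not secured (and demanding a path from $T$ to $u'$ that also avoids $v_1,v_2,v_6,v_7,u$ and the three vertices of $T'$ means deleting $8$ vertices, beyond what $7$-connectivity controls). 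Even granting that step, $uT'$ and $v_2v_7$ would both remain missing. The fallbacks are likewise incomplete: if the second $4$-path contributes only the edge $v_2v_6$, the graph on $\{v_1,v_2,v_6,v_7\}$ is a $4$-cycle and no three of these vertices form a triangle, so the deficit stays at two; the ``short case check'' in the degenerate case is asserted, not performed; the case $|S|\ge 5$, where $T'$ collides with $T$ or with the singleton branch sets, is not treated; and the appeal to Claim~\ref{claim:planar} is out of scope, since that claim is proved only under the hypothesis that $G[N(u)]$ is $K_4$-free, inside the proof of Lemma~\ref{lem:neighborhoods}.

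The missing idea is the device the paper uses to manufacture the two extra branch sets with guaranteed attachments. Writing $v_1v_2v_3v_4$ for the path induced by $u'$'s neighbours in the copy of $C^{1,2}_8\subseteq N(u)$ and $w_1,w_2,w_3$ for the remaining triangle (with $w_1$ adjacent to $v_1,v_2$ and $w_2$ to $v_3,v_4$), one deletes $\{u,w_1,w_2\}$, notes that the result is $4$-connected and non-planar by Lemma~\ref{lem:nonplanar} (this is where the Four Colour Theorem enters), and applies Theorem~\ref{th:triangular} to the triangle $\{u',v_1,v_2\}$ with $Z=\{w_3,u',v_1,v_2,v_3,v_4\}$. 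The theorem returns a rooted $K_5$-minor $(u',v_1,v_2,Z_1,Z_2)$ in which $Z_1$ and $Z_2$ are forced to meet $\{w_3,v_3,v_4\}\subseteq N(u)$, so adding back $\{u\}$ and $\{w_1,w_2\}$ as branch sets yields a $K_7^-$-minor whose single tolerated non-edge is between $u'$ and $\{w_1,w_2\}$. If you want to salvage your construction you need an analogous tool that certifies adjacency of the auxiliary branch sets to $u$, $u'$ and the three singletons simultaneously; raw $7$-connectivity alone does not supply enough correctly attached disjoint paths.
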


\begin{proof}
Let suppose that $u$ and $u'$ are adjacent. Since every vertex of $N(u)$ has degree at least $4$ and $u' \in N(u)$
by hypothesis, denote by $v_1$, $v_2$, $v_3$ and $v_4$ the four neighbors of $u'$ in the subgraph $C^{1,2}_8$ of $N(u)$.
These four vertices induce a path in this subgraph, say $v_1v_2v_3v_4$. Let denote by $w_1$, $w_2$ and $w_3$ the vertices of
$N(u) \setminus \{u',v_1,v_2,v_3,v_4\}$ in a way that $w_1$ is the only vertex adjacent to both $v_1$ and $v_2$ and $w_2$ is
the one adjacent to $v_3$ and $v_4$. Now consider $H = G \setminus \{u,w_1,w_2\}$. $H$ is $4$-connected and non-planar
by Lemma \ref{lem:nonplanar}. Let $Z = \{w_3, u',v_1,v_2,v_3,v_4\}$, then $Z$ is not triangular with respect to $\{u',v_1,v_2\}$
because $u'$ has degree $4$ in $Z$. Thus by Theorem \ref{th:triangular}, there exists $Z_1$ and $Z_2$ such that $(u',v_1,v_2,Z_1,Z_2)$
is a $K_5$-minor in $H$ and such that $Z_1 \cap Z,Z_2 \cap Z \neq \emptyset$. But then $(u,u',v_1,v_2,Z_1,Z_2,\{w_1,w_2\})$ is
a $K_7^-$-minor in $G$, a contradiction.
\end{proof}

The following lemma is the key to prove that a lot of degree $8$ vertices are contained in a $K_5$.

\begin{lem}
Let $u$ and $u'$ be two vertices of degree $8$ such that $N(u)$ and $N(u')$ contain the graph $C^{1,2}_{8}$ as a subgraph
and $|N(u) \cup N(u')| \geq 9$, then $G$ contains a $K_7^-$-minor.
\label{lem:nodifferent}
\end{lem}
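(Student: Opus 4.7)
My plan is to derive a contradiction by producing a $K_7^-$-minor of $G$. Since both $N(u)$ and $N(u')$ contain $C^{1,2}_8$, Lemma \ref{lem:noadjacent} forces $u\not\sim u'$, so I may take $\{u\}$ and $\{u'\}$ as two branch sets and let $uu'$ be the (sole) missing edge of the $K_7^-$. It then suffices to exhibit five pairwise adjacent connected subsets $B_1,\ldots,B_5$ of $G\setminus\{u,u'\}$ with each $B_i$ meeting both $N(u)$ and $N(u')$.

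To build the $B_i$'s I would apply Theorem \ref{th:triangular} inside a suitable $4$-connected non-planar auxiliary graph. The hypothesis $|N(u)\cup N(u')|\ge 9$ gives $|N(u)\cap N(u')|\le 7$ and forces both $N(u)\setminus N(u')$ and $N(u')\setminus N(u)$ to be nonempty; pick $w\in N(u)\setminus N(u')$ and let $H=G\setminus\{u,u',w\}$, which is $4$-connected and non-planar by Lemma \ref{lem:nonplanar}. I would then choose a triangle $T=\{v_1,v_2,v_3\}$ inside $N(u)\cap N(u')$ and an induced subgraph $Z$ of $H$ with $T\subseteq V(Z)\subseteq N(u)\cap N(u')$ that is not triangular with respect to $T$. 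Theorem \ref{th:triangular} then produces connected $Z_1,Z_2\subseteq V(H)$ such that $(v_1,v_2,v_3,Z_1,Z_2)$ is a $K_5$-minor of $H$ with $Z_i\cap Z\neq\emptyset$. Since $V(Z)\subseteq N(u)\cap N(u')$, each of the five branches $\{v_1\},\{v_2\},\{v_3\},Z_1,Z_2$ contains a vertex adjacent to both $u$ and $u'$; together with $\{u\}$ and $\{u'\}$ they form the desired $K_7^-$-minor with only $uu'$ missing.

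The main obstacle is producing the right pair $(T,Z)$, which I expect will require a case analysis on $k=|N(u)\cap N(u')|\in\{0,\ldots,7\}$. When $k$ is large, the rigid $C^{1,2}_8$ structure of $N(u)$ supplies plenty of triangles in the common neighborhood, and the extra non-$C^{1,2}_8$ edges catalogued in the proof of Lemma \ref{lem:neighborhoods} should boost enough degrees in $Z\setminus\{v_i\}$ to block the triangular conditions. When $k$ is small, I would augment $Z$ with vertices of $H\setminus(N(u)\cap N(u'))$ reached by short paths from the $7$-connectivity of $G$, and replace $\{w\}$ with a larger auxiliary branch set in the spirit of $\{w_1,w_2\}$ from the proof of Lemma \ref{lem:noadjacent}; in the extreme case $k=0$ the five branches would be built directly out of five internally disjoint paths from $N(u)$ to $N(u')$ supplied by Menger's theorem applied to $G\setminus\{u,u'\}$, with pairwise adjacency coming from the density of $C^{1,2}_8$ on both sides.
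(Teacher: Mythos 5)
Your opening moves match the paper: invoke Lemma \ref{lem:noadjacent} to get $u\not\sim u'$, and aim for a $K_7^-$-minor whose only missing edge is $uu'$. But the heart of your proposal is deferred to ``a case analysis on $k=|N(u)\cap N(u')|\in\{0,\ldots,7\}$'' that you never carry out, and this is exactly where the difficulty lives. The paper's key idea, which you are missing, is to \emph{eliminate} that case analysis: by $7$-connectivity there are $7$ internally disjoint $u$--$u'$ paths, hence $7$ disjoint paths between $N(u)$ and $N(u')$ (some possibly of length $0$); contracting the nonzero-length ones yields a minor $G'$ in which $|N(u)\cup N(u')|=9$, i.e.\ $N(u')$ contains all of $N(u)$ except one vertex, while $N(u)$ still carries its $C^{1,2}_8$. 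In that single normalized situation the paper simply reads off an explicit $K_7^-$-minor, namely $(u,\{u',v_5\},v_2,v_3,v_6,v_7,\{v_1,v_4,v_8\})$, with no appeal to Theorem \ref{th:triangular} at all.

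Without that contraction step, your plan only plausibly closes when $k$ is large. When $k\le 2$ there is no triangle $T$ inside $N(u)\cap N(u')$, so your choice of $(T,Z)$ with $V(Z)\subseteq N(u)\cap N(u')$ is impossible, and the suggested repairs do not hold up: in the $k=0$ fallback, five internally disjoint $N(u)$--$N(u')$ paths cannot be pairwise adjacent ``by the density of $C^{1,2}_8$,'' because $C^{1,2}_8$ is $K_4$-free (that is the standing hypothesis of Lemma \ref{lem:neighborhoods}), so any four of the five endpoints on the $N(u)$ side already contain a non-adjacent pair; you would need to merge paths or route adjacencies elsewhere, which is precisely the unproved content. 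Augmenting $Z$ with vertices outside $N(u)\cap N(u')$ also breaks your adjacency argument, since then $Z_i\cap Z\neq\emptyset$ no longer guarantees that $Z_i$ meets $N(u)\cap N(u')$, and the resulting minor could miss more than the edge $uu'$. As written, the proposal is a program rather than a proof; the contraction of the Menger paths is the one idea that makes the lemma go through uniformly.
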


\begin{proof}
By Lemma \ref{lem:noadjacent}, we can assume that $u$ and $u'$ are not adjacent.
Denote by $v_1,\ldots,v_8$ the vertices of $N(u)$ as shown in Figure \ref{fig:neighborhoods}.
Since $G$ is $7$-connected, there is at least $7$ internally disjoint paths between $u$ and $u'$ that induce $7$ disjoint paths
between $N(u)$ and $N(u')$. Note that theses paths can be of length $0$ if the two neighborhoods intersect.
By contracting the non-zero length paths, we obtain a graph with $|N(u) \cup N(u')| = 9$.
From now on, we consider only this new graph $G'$. By construction of $G'$,
$N(u)$ still contain a $C^{1,2}_8$-subgraph.

By symmetry of $C^{1,2}_8$, we can assume that $v_1$ is the only neighbor of $u$
which is not a neighbor of $u'$. In particular, we have that $v_i \in N(u')$ for all $i \ge 2$.
But then $(u, \{u',v_5\}, v_2, v_3, v_6, v_7, \{v_1,v_4,v_8\})$ is a $K_7^-$-minor (only $v_3$ and $v_6$
are not adjacent) of $G'$ and thus a $K_7^-$-minor of $G$, a contradiction.
\end{proof}

\begin{claim}
Let $u$ and $u'$ be two vertices of degree $8$ such that $N(u)$ and $N(u')$ contain the graph $C^{1,2}_{8}$ as a subgraph,
then $N(u) \neq N(u')$.
\label{claim:nosame}
\end{claim}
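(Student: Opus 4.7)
The plan is to argue by contradiction. Suppose $N(u)=N(u')$; call this common set $V = \{v_1, \dots, v_8\}$, labeled so that $v_1, v_2, \dots, v_8$ is the cyclic ordering of the $C^{1,2}_8$ subgraph known to lie inside $G[V]$. By Lemma \ref{lem:noadjacent} we have $u\not\sim u'$, and Lemma \ref{lem:smalldegree} guarantees $|V(G)|\geq 25$, so we can pick a vertex $w \in V(G) \setminus (V \cup \{u,u'\})$.

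The heart of the argument is to use the $7$-connectivity of $G$ to produce seven ``nice'' internally vertex-disjoint paths $P_2,\dots,P_8$ from $u$ to $w$. Menger's theorem gives such a family; since $N(u) = V$ has exactly $8$ elements, their seven second vertices are seven of the eight $v_i$'s, leaving one unused. I would then tune the family so that (a)~no path passes through $u'$, and (b)~each path meets $V$ only at its second vertex. The key observation for both reductions is the same: if a path $P_k$ contains $u'$ (respectively revisits $V$ past its second vertex), then the $V$-vertex $y$ immediately following $u'$ (respectively the revisit itself) satisfies $y \in V$, and by internal disjointness $y$ cannot be the second vertex of any other path; hence $y$ is the unique unused neighbor $v^*$, and one can replace $P_k$ by the shortcut $u-v^*-(\text{tail of }P_k\text{ after }y)$. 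This strictly decreases total length, so iterating terminates, and neither kind of shortcut reintroduces the other defect because the new interior is a subset of the old. This is the only step of the proof requiring genuine care.

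After relabeling (by the dihedral symmetry of $C^{1,2}_8$) so that $v_1$ is the unused vertex and $v_2, v_3, \dots, v_8$ are the second vertices in cyclic order, let $W$ be the union of all the path interiors together with $w$, and propose the branch sets
\[
B_1 = \{u\},\ B_2 = \{u',v_1\},\ B_3 = \{v_2\},\ B_4 = \{v_3\},\ B_5 = \{v_4\},\ B_6 = \{v_5\},\ B_7 = \{v_6,v_7,v_8\}\cup W.
\]
Connectedness of $B_2$ uses the edge $u'v_1$; $W$ is connected because all seven paths meet at $w$; and $B_7$ is connected because $v_6v_7, v_7v_8 \in E(C^{1,2}_8)$ and each of $v_6,v_7,v_8$ is adjacent to the first interior vertex of its own path. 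The branch sets are pairwise disjoint by construction, since the path interiors meet neither $V$ nor $\{u,u'\}$.

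Finally, I would verify the $\binom{7}{2}=21$ pair-adjacencies. $B_1 \sim B_2$ via the edge $uv_1$; both $B_1$ and $B_2$ are joined to every $B_j$ with $j\geq 3$ via $u$ or $u'$, which are adjacent to every $v_i$; each $B_j$ with $3\leq j \leq 6$ is joined to $B_7$ through the first interior vertex of the associated path; and among $B_3, B_4, B_5, B_6 = \{v_2\},\{v_3\},\{v_4\},\{v_5\}$ every pair of vertices is at cyclic distance $1$ or $2$ on $C^{1,2}_8$ \emph{except} $v_2$ and $v_5$, whose cyclic distance is $3$. Hence at most one pair of the seven branch sets fails to be joined by an edge of $G$, which forces a $K_7^-$-minor of $G$ and contradicts the choice of $G$. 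The only real obstacle is the step-two path tuning; the remaining steps amount to listing seven branch sets and checking about twenty adjacencies.
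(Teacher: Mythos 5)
Your argument is correct, but it reaches the contradiction by a different mechanism than the paper. The paper disposes of this claim in one line by invoking the argument of Lemma \ref{lem:nodifferent}: when $N(u)=N(u')$ the seven connecting paths there are trivial, $u'$ is adjacent to all of $N(u)$, and the explicit branch sets $(u,\{u',v_5\},v_2,v_3,v_6,v_7,\{v_1,v_4,v_8\})$ (in the labelling of Figure \ref{fig:neighborhoods}) already form a $K_7^-$-minor entirely inside $N[u]\cup\{u'\}$ --- no vertex outside that set is touched. You instead keep six small branch sets inside $N[u]\cup\{u'\}$ and manufacture the seventh by fanning seven internally disjoint paths from $u$ out to an external vertex $w$; with the plain cyclic labelling your adjacency check is clean (the single missing pair $v_2v_5$ is at cyclic distance $3$, everything else is at distance $1$ or $2$ or goes through $u$, $u'$, or the path fan). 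What this buys you is independence from the particular drawing of $C^{1,2}_8$, and the adjacency verification becomes almost mechanical; what it costs is the path-rerouting step (forcing the fan to avoid $u'$ and to meet $N(u)$ only in second vertices), which you correctly identify as the delicate point and which the paper's local construction avoids entirely. Your rerouting argument is sound --- the vertex following $u'$ on a bad path lies in $N(u')=N(u)$ and, being on no other path and not a second vertex, must be the unique unused neighbour, so the shortcut strictly shortens the family and terminates --- but note that it silently uses $w\notin N(u')$ (true since $N(u')=V$ and $w\notin V$) to guarantee that $u'$ is never the penultimate vertex of a path. Both routes are valid; yours is longer but more robust to the unspecified labelling.
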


\begin{proof}
Suppose that there exists two vertices $u$ and $u'$ of degree $8$ such that
$N(u) = N(u')$. Then we can create a $K_7^-$-minor in $G$ by using the same argument
as in the proof of Lemma \ref{lem:nodifferent}, a contradiction.
\end{proof}

\begin{claim}
At most one vertex of degree $8$ have a neighborhood containing the graphs $C^{1,2}_{8}$
as a subgraph.
\label{claim:notmuch}
\end{claim}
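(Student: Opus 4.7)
The proof of Claim \ref{claim:notmuch} should be essentially an immediate combination of Lemma \ref{lem:nodifferent} and Claim \ref{claim:nosame}. My plan is the following: assume for contradiction that there exist two distinct vertices $u \neq u'$ of degree $8$ in $G$ whose neighborhoods each contain $C^{1,2}_{8}$ as a subgraph, and then split into cases according to the size of $N(u) \cup N(u')$.

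Since $|N(u)| = |N(u')| = 8$, the union $N(u) \cup N(u')$ has size between $8$ and $16$. If $|N(u) \cup N(u')| \geq 9$, then Lemma \ref{lem:nodifferent} immediately produces a $K_7^-$-minor in $G$, which contradicts the fact that $G$ is $K_7^-$-minor free. If instead $|N(u) \cup N(u')| = 8$, then since both sets have cardinality $8$ and their union also has cardinality $8$, we must have $N(u) = N(u')$; but this possibility is already ruled out by Claim \ref{claim:nosame}. Either case yields a contradiction, so no two such vertices can coexist and the claim follows.

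There is no real obstacle here: the heavy lifting was done in Lemma \ref{lem:nodifferent} (the embedding of a $K_7^-$-minor exploiting the explicit structure of $C^{1,2}_{8}$ and the $7$-connectivity of $G$) and in Claim \ref{claim:nosame} (the observation that two degree-$8$ vertices with identical neighborhoods can be handled by the same construction). The only thing to be careful about is noting that the two cases $|N(u)\cup N(u')| \ge 9$ and $|N(u)\cup N(u')|=8$ exhaust all possibilities, which is a trivial counting argument.
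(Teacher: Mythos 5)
Your proof is correct and follows essentially the same route as the paper: the paper likewise invokes Claim \ref{claim:nosame} to rule out $N(u)=N(u')$ (equivalently, your case $|N(u)\cup N(u')|=8$) and then applies Lemma \ref{lem:nodifferent} in the remaining case $|N(u)\cup N(u')|\geq 9$. Your version merely makes the exhaustive case split on $|N(u)\cup N(u')|$ slightly more explicit.
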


\begin{proof}
Suppose that there exists two vertices of degree $8$ such that
their neighborhood contains the graph $C^{1,2}_{8}$ as a subgraph.
By Claim \ref{claim:nosame}, these two vertices have a different neighborhood.
By Lemma \ref{lem:nodifferent}, this imply that there is a $K_7^-$-minor in $G$, a contradiction.
\end{proof}

\begin{lem}
There is at least $5$ different $K_5$ in $G$.
\label{lem:lotofk5}
\end{lem}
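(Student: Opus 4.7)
The plan is to combine the lower bound on the number of degree-$8$ vertices with the structural information about their neighborhoods, then finish by a simple pigeonhole. By Lemma~\ref{lem:smalldegree}, $G$ has at least $25$ vertices of degree $8$. Claim~\ref{claim:notmuch} says that at most one of them can have its neighborhood containing $C^{1,2}_8$ as a subgraph. Combining this with Lemma~\ref{lem:neighborhoods}, which gives that every degree-$8$ neighborhood contains either $K_4$ or $C^{1,2}_8$, I would conclude that at least $24$ degree-$8$ vertices $u$ have a $K_4$ inside $N(u)$.

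For each such vertex $u$, I would fix an arbitrary $K_4$ contained in $N(u)$ and consider the subgraph induced by $\{u\} \cup V(K_4)$; since $u$ is adjacent to all vertices of $N(u)$, this is a $K_5$ of $G$. This defines a map $\phi$ from the set $S$ of (at least $24$) chosen degree-$8$ vertices to the set of $K_5$-subgraphs of $G$, sending $u$ to the $K_5$ just described.

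To count distinct images I would use the trivial observation that any $K_5$ has exactly five vertices, so at most five elements of $S$ can map to the same $K_5$ under $\phi$. Hence the number of distinct $K_5$-subgraphs obtained is at least $\lceil 24/5 \rceil = 5$, which is exactly the statement of the lemma.

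There is essentially no obstacle here: the real work has been done in Lemma~\ref{lem:neighborhoods} and Claim~\ref{claim:notmuch}, and the only thing the proof needs is the numerical bookkeeping above. The only mild point to be careful about is that we do not need the five $K_5$'s to be vertex-disjoint (they will in general overlap along the fixed $K_4$'s); we just need them to be distinct as subgraphs, which the pigeonhole argument provides directly.
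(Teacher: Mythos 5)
Your proof is correct and follows essentially the same route as the paper: at least $25$ degree-$8$ vertices, at most one with a $C^{1,2}_8$ neighborhood, so at least $24$ yield a $K_5$ through a $K_4$ in their neighborhood, and the pigeonhole bound $\lceil 24/5 \rceil = 5$ (each $K_5$ containing at most five of these vertices) finishes the argument exactly as in the paper. Your added remark that the five $K_5$'s need only be distinct, not disjoint, is a correct reading of what the lemma asserts.
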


\begin{proof}
By Lemma \ref{lem:smalldegree}, there is at least $25$ vertices of degree $8$ and by
Lemma \ref{claim:notmuch}, there is at most one vertices of degree $8$ containing the graph $C^{1,2}_{8}$
as a subgraph of their neighborhood. By Lemma \ref{lem:neighborhoods}, this imply that there is at least $24$ vertices
of degree $8$ that contains a $K_4$ in their neighbourhood. As every $K_5$-subgraph can contain at most $5$
vertices of degree $8$, this finally imply that there is at least $\lceil \frac{24}{5} \rceil = 5$ different $K_5$-subgraph in $G$.
\end{proof}

The following lemma is the last key to the proof. It uses techniques introduced by Kawarabayashi
and Toft \cite{kt1}.
\begin{lem}
There is $3$ different copies of $K_5$ $L_1$, $L_2$ and $L_3$ such that $|L_1 \cup L_2 \cup L_3| \geq 12$.
\label{lem:diffk5}
\end{lem}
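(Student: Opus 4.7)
Plan. By Lemma~\ref{lem:lotofk5}, $G$ contains at least five distinct $K_5$-subgraphs $L_1,\ldots,L_5$, and I argue by contradiction, assuming that $|L_i\cup L_j\cup L_k|\leq 11$ for every three indices $i<j<k$.

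The first step is to show that two distinct $K_5$'s in $G$ share at most three vertices. If instead $L\cap L'=\{a,b,c,d\}$, with private vertices $e\in L\setminus L'$ and $f\in L'\setminus L$, then $\{a,b,c,d,e,f\}$ induces $K_6$ minus the single edge $ef$. Since $G$ is $7$-connected by Theorem~\ref{th:maderconnected} and $|V(G)|\geq 25$ by Lemma~\ref{lem:smalldegree}, the set $B_7:=V(G)\setminus\{a,b,c,d,e,f\}$ is non-empty and induces a connected subgraph. Each of $a,\ldots,f$ has at most five neighbours among $\{a,\ldots,f\}$, so by the minimum degree condition $\delta(G)\geq 8$ each has at least one neighbour in $B_7$. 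The branch sets $\{a\},\{b\},\{c\},\{d\},\{e\},\{f\},B_7$ then constitute a $K_7^-$-minor of $G$, whose only missing adjacency is between $\{e\}$ and $\{f\}$, a contradiction.

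With pairwise intersections of $K_5$'s thus bounded by three, I would pick $L_1$ and $L_2$ maximising $|L_1\cup L_2|$ and look for a third $K_5$ contributing enough new vertices. When $L_1\cap L_2=\emptyset$, so $|L_1\cup L_2|=10$, it suffices to find $L_k$ with $|L_k\cap(L_1\cup L_2)|\leq 3$; otherwise every remaining $L_k$ has four or five vertices in $L_1\cup L_2$, and the pairwise bound forces each such $L_k$ into one of the patterns $(2,2),(3,1),(1,3),(3,2),(2,3)$ on $(L_1,L_2)$. A pigeonhole over the three remaining $K_5$'s, combined with the supply of at least $24$ degree-$8$ anchor vertices from Lemma~\ref{lem:smalldegree}, then shows that several $K_5$'s must share a common triangle, from which a $K_7^-$-minor is extracted by an extension of the first step. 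The remaining subcases $|L_1\cap L_2|\in\{1,2,3\}$ are treated by the same template.

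The main obstacle is this second step: controlling the combinatorial patterns of intersection among five $5$-sets with pairwise intersection at most three, since extremal configurations (such as several $K_5$'s sharing a common triangle) can have all triple unions as small as nine and must be ruled out either by producing enough degree-$8$ anchor vertices to force additional $K_5$'s beyond our initial five, or by a direct $K_7^-$-minor construction using the $7$-connectivity of $G$ in the spirit of the first step. The first step itself is comparatively clean, consisting of a single $K_7^-$-minor construction from the $K_6^-$ using Mader's connectivity theorem and the minimum degree bound.
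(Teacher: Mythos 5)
There is a genuine gap. Your first step is fine as far as it goes --- the seven branch sets $\{a\},\ldots,\{f\},B_7$ do give a $K_7^-$-minor, since $G$ is $7$-connected with $\delta(G)\geq 8$, so it correctly excludes a $K_6^-$-subgraph and hence two $K_5$'s meeting in four vertices. But this only bounds pairwise intersections by three, and the configuration you yourself identify as the main obstacle --- several $K_5$'s sharing a common triangle, where every triple union has size as small as $9$ --- is exactly the case your argument cannot reach. No pigeonhole or counting over the five $K_5$'s from Lemma~\ref{lem:lotofk5}, nor any supply of degree-$8$ ``anchor'' vertices, can rule it out: it is a perfectly consistent intersection pattern, and defeating it requires a structural minor construction, not combinatorics of $5$-sets. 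The paper's proof handles it at the outset (Claim~\ref{claim:nomorethantwo}) by showing that \emph{two} $K_5$'s meeting in a triangle $S$ already force a $K_7$-minor: $G\setminus S$ is $4$-connected and non-planar by Lemma~\ref{lem:nonplanar}, and the rooted-$K_4$-minor theorem (2.6) of Robertson--Seymour--Thomas links the four private vertices of $L_1\,\Delta\,L_2$ into a $K_4$-minor, which together with $S$ yields $K_7$. This is not ``an extension of the first step'': a naive singleton-branch-set construction on the seven vertices of $L_1\cup L_2$ misses up to four edges, far more than $K_7^-$ tolerates.

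The same problem recurs in your second step. Once intersections are bounded, the paper does not search for a good triple; it shows that under the contradiction hypothesis no two $K_5$'s can intersect in $0$, $1$, or $2$ vertices either (Claims~\ref{claim:nodisjoint}, \ref{claim:noone}, \ref{claim:notwo}), each case requiring a bespoke $K_7$- or $K_7^-$-minor built from Menger paths between the $K_5$'s and/or from Theorem~\ref{th:triangular} applied to a triangle inside one of them. Since at least two distinct $K_5$'s exist, this is absurd, proving the lemma. Your proposal replaces all of these constructions with an unspecified case analysis of intersection patterns, which is precisely where the content of the lemma lies; as written, the proof does not go through.
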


\begin{proof}
Assume by contradiction that no three copies of $K_5$, denoted $L_i$, $L_j$ and $L_k$, are such that
$|L_i \cup L_j \cup L_k| \geq 12$.

The next claim follows easily from the $7$-connectivity of $G$.
\begin{claim}
$G$ does not contain a $K_6^-$ subgraph.
\label{claim:nok6minus}
\end{claim}

\begin{proof}
Suppose that $G$ contains a $K_6^-$ subgraph. Since $G$ is not isomorphic to $K_6^-$, there exists
a vertex that is not contained in this $K_6^-$ subgraph. Since $G$ is $7$-connected, by Menger's theorem
there are $7$ vertex-disjoint paths between $x$ and the vertices of the $K_6^-$ subgraph. This induces a
$K_7^-$-minor, a contradiction.
\end{proof}

\begin{claim}
Two different $K_5$ intersects on at most $2$ vertices.
\label{claim:nomorethantwo}
\end{claim}

\begin{proof}
Let $L_1$ and $L_2$ be two copies of $K_5$ of $G$ and suppose that they intersect on $4$ vertices, then $G$ contains a $K_6^-$ as a subgraph,
contradicting Claim \ref{claim:nok6minus}. If they intersect on $3$ vertices, then denote
by $S$ the set of vertices in $L_1 \cap L_2$ and by $H$ the set of vertices of $L_1 \Delta L_2$.
By Lemma \ref{lem:nonplanar}, $G \setminus S$ is $4$-connected and non-planar so by (2.6) of \cite{rst1} there is
a $K_4$-minor rooted in $H$ and a $K_7$-minor in $G$, a contradiction.
\end{proof}

\begin{claim}
No two $K_5$ are disjoints.
\label{claim:nodisjoint}
\end{claim}

%

\begin{proof}
Assume that $L_1$ and $L_2$ are two disjoint copies of $K_5$. For any copy of $K_5$ $L_3$, since
two copies of $K_5$ cannot intersect on $4$ vertices and since $|L_1 \cup L_2 \cup L_3| < 12$,
$|L_3 \cap L_1| \geq 2$ and $|L_3 \cap L_2| \geq 2$.
By Claim \ref{claim:nomorethantwo}, $|L_3 \cap L_1| = 2$ and $|L_3 \cap L_2| = 2$.
Let $L_3 \cap L_1 = \{a,b\}$ and $L_3 \cap L_2 = \{c,d\}$.

Now $G \setminus \{a,b,c,d\}$ is $3$-connected so by Menger's theorem there are $3$ vertex disjoint
paths $P_1$, $P_2$ and $P_3$ between $L_1 \setminus \{a,b\}$ and $L_2 \setminus \{c,d\}$ but then
$(a,b,c,d,V(P_1), V(P_2), V(P_3))$ is a $K_7$-minor, a contradiction.
\end{proof}

\begin{claim}
No two $K_5$ intersect on exactly one vertex.
\label{claim:noone}
\end{claim}

\begin{proof}
Assume that $L_1 \cap L_2 = \{x\}$. Let $L_3$ be a copy of $K_5$ different from $L_1$ and $L_2$.
By Claim \ref{claim:nodisjoint}, $L_3$ intersects both $L_1$ and $L_2$.

Suppose that $x \in L_3$. Since $|L_1 \cup L_2 \cup L_3| < 12$,
$|L_1 \cup L_3| = |L_2 \cup L_3| = 2$. Let $y \in (L_1 \cap L_3) \setminus \{x\}$.
$G \setminus \{x,y\}$ is $5$-connected and non-planar by Lemma \ref{lem:nonplanar}.
Let $Z = (L_1 \cup L_2 \cup L_3) \setminus \{x,y\}$. Denote $T = L_2 \setminus \{x,y\} = \{v_1,v_2,v_3\}$.
$Z$ is not triangular with respect to $T$, hence there exists $Z_1,Z_2$ such that $(v_1,v_2,v_3,Z_1,Z_2)$ is
a $K_5$-minor in $G \setminus \{x,y\}$ and such that $Z_1 \cap Z, Z_2 \cap Z \neq \emptyset$.
Moreover we can assume without loss of generality that $y$ is adjacent to $Z_1$. Thus $(v_1,v_2,v_3,Z_1,Z_2,x,y)$ is a $K_7^-$-minor in $G$
(only $y$ and $Z_2$ may not be adjacent), a contradiction.

Suppose now that $x \not\in L_3$. Since $|L_1 \cup L_2| = 9$ and $|L_1 \cap L_2 \cap L_3| < 12$, by Claim
\ref{claim:nomorethantwo}, we can assume that $|L_3 \cap L_1| = 2$. Let us denote $L_3 \cap L_1 = \{a,b\}$.

If $|L_3 \cap L_2| = 1$, let $\{c\} = L_3 \cap L_2$. Now $G \setminus \{a,b,c,x\}$
is $3$-connected. So by Menger's theorem, there are $3$ vertex disjoint paths
$P_1$, $P_2$ and $P_3$, between $(L_1 \cup L_3) \setminus \{a,b,c,x\}$ and
$L_2 \setminus \{c,x\}$. Hence $(a,b,c,x,V(P_1), V(P_2), V(P_3))$ is a $K_7$-minor,
a contradiction.

If $|L_3 \cap L_2| = 2$, let $\{c,d\} = L_3 \cap L_2$.
$G \setminus \{a,b,c,d,x\}$ is $2$-connected, so by Menger's theorem,
there are $2$ vertex disjoint paths $P_1$ and $P_2$
between $(L_1 \cup L_3) \setminus \{a,b,c,d,x\}$ and $L_2 \setminus \{c,d,x\}$.
But $(a,b,c,d,x,V(P_1), V(P_2))$ is a $K_7$-minor, a contradiction.
\end{proof}

\begin{claim}
No two $K_5$ intersect on exactly two vertices.
\label{claim:notwo}
\end{claim}

\begin{proof}
Assume that $L_1 \cap L_2 = \{x,y\}$. Let $L_3$ be a $K_5$ different from $L_1$ and $L_2$.
By Claims \ref{claim:nomorethantwo}, \ref{claim:nodisjoint} and \ref{claim:noone}, $L_3$ intersects each $L_1$ and $L_2$
on two vertices.

Suppose that $L_1 \cap L_2 \cap L_3 = \emptyset$ and let $L_1 \cap L_3 = \{u,v\}$
and $L_2 \cap L_3 = \{z,t\}$. Then $\{u,v,x,y,z,t\}$ is a $K_6$-subgraph, a contradiction with Claim \ref{claim:nok6minus}.

Suppose that $L_1 \cap L_2 \cap L_3 = \{x\}$ and let $(L_1 \cap L_2) \setminus \{x\} = \{y\}$,
$(L_1 \cap L_3) \setminus \{x\} = \{z\}$, $(L_2 \cap L_3) \setminus \{x\} = \{t\}$.
Now $G \setminus \{x,t\}$ is $5$-connected and non-planar. Let $Z = (L_1 \cup L_2) \setminus \{x,t\}$ and
let $T = \{v_1,v_2,v_3\} = L_2 \setminus \{x,t\}$. $Z$ is not triangular with respect to $T$, so there exists
$Z_1$ and $Z_2$ such that $(v_1,v_2,v_3,Z_1,Z_2)$ is a $K_5$-minor in $G \setminus \{x,t\}$.
Without loss of generality, we can assume that $z \in Z_1$ but then $(v_1,v_2,v_3,Z_1,Z_2,x,t)$ is a $K_7^-$-minor in $G$,
a contradiction.

Finally, suppose that $L_1 \cap L_2 \cap L_3 = \{x,y\}$, $G \setminus \{x,y\}$ is $5$-connected and non-planar.
Let $Z = (L_1 \cup L_2 \cup L_3) \setminus \{x,y\}$ and $T = \{v_1,v_2,v_3\} = L_1 \setminus \{x,y\}$.
$Z$ is not triangular with respect to $T$ so there exists $Z_1$ and $Z_2$ such that $(v_1,v_2,v_3,Z_1,Z_2)$
is a $K_5$-minor in $G \setminus \{x,y\}$, but then $(v_1,v_2,v_3,Z_1,Z_2,x,y)$ is a $K_7$-minor in $G$,
a contradiction.
\end{proof}

Claims \ref{claim:nodisjoint}, \ref{claim:noone} and \ref{claim:notwo} together with Claim \ref{claim:nomorethantwo}
conclude the proof of the lemma.
\end{proof}

We conclude the proof of Theorem \ref{th:7color} by using the following theorem due to Kawarabayashi and Toft \cite{kt1}.

\begin{thm}[Kawarabayashi \& Toft, 2005, \cite{kt1}]
Let $G$ be a $7$-connected graph with at least $19$ vertices. Suppose that $G$ contains
three $K_5$, say $L_1$, $L_2$ and $L_3$, such that $|L_1 \cup L_2 \cup L_3| \geq 12$, then
$G$ contains a $K_7$-minor.
\end{thm}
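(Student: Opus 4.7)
The plan is to argue by case analysis on the intersection pattern of the three given $K_5$'s $L_1, L_2, L_3$. Writing $s_{ij} = |L_i \cap L_j|$ and $t = |L_1 \cap L_2 \cap L_3|$, inclusion--exclusion converts the hypothesis $|L_1 \cup L_2 \cup L_3| \ge 12$ into $s_{12} + s_{13} + s_{23} - t \le 3$. Together with the obvious bounds $0 \le t \le s_{ij} \le 5$, this leaves only a short list of feasible patterns to handle: for instance the three cliques may be pairwise disjoint; share a common vertex or common edge; or two of them may share two vertices while the third intersects them in various ways.

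In each case I would try to realise the $K_7$-minor as follows. Pick $k \in \{3,4,5\}$ pairwise adjacent vertices from a single $K_5$ (say $L_1$) and use them as singleton branch sets; the remaining $7-k$ branch sets then have to be built from the other $K_5$'s together with some paths through $V(G)\setminus (L_1\cup L_2\cup L_3)$. The $7$-connectivity of $G$ is exploited via Menger's theorem: after deleting the at most $5$ vertices that have been fixed as singleton branch sets, $G$ minus this set remains $(7-k)$-connected, so one obtains enough vertex-disjoint paths to glue the remaining $K_5$'s and external paths into the sought branch sets. The hypothesis $|V(G)| \ge 19$ guarantees at least $4$ vertices outside $L_1\cup L_2\cup L_3$, which is the slack needed when the three cliques have large union.

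The main obstacle will be the subcases of small pairwise overlap, especially when the three $K_5$'s are pairwise disjoint (so $|L_1\cup L_2\cup L_3| = 15$) or share only isolated vertices: the internal structure of the union then does not by itself yield a $K_7$, and one has to combine Menger-type arguments in $G$ minus a carefully chosen $4$- or $5$-set with the adjacencies already present in two of the $K_5$'s. The closely related patterns in which two $K_5$'s share one or two vertices and a third clique meets both were already treated inside the proof of Lemma~\ref{lem:diffk5}, and the same ``apex plus vertex-disjoint paths'' gadgets should transfer; the technical heart of the proof will be checking that in every feasible intersection pattern, one can arrange the singleton branch sets so that the residual connectivity of $G$ is large enough to furnish the missing paths, and that the resulting minor is a full $K_7$ rather than only $K_7^-$.
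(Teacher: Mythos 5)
This statement is not proved in the paper at all: it is quoted verbatim from Kawarabayashi and Toft \cite{kt1} and used as a black box in the final step, so there is no internal proof to compare yours against. Judged on its own, your text is a plan rather than a proof, and the plan has a genuine gap exactly where you yourself locate ``the technical heart'': none of the feasible intersection patterns is actually carried out, and the hardest ones (three pairwise disjoint $K_5$'s, or $K_5$'s meeting only in isolated vertices) are precisely the cases where the naive strategy breaks. The specific failure is in the Menger step. Deleting $k$ singleton branch sets from a $7$-connected graph does leave a $(7-k)$-connected graph, and Menger then gives $7-k$ vertex-disjoint paths between two prescribed vertex sets; but that only lets you merge components, it does not make the resulting $7-k$ branch sets complete to each of the $k$ fixed singletons and to one another. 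For that you need a \emph{rooted} minor statement (this is why the paper's own arguments lean on Theorem~\ref{th:triangular} and on (2.6) of \cite{rst1}), and in the disjoint case even a rooted $K_4$-minor is not obviously available. If two disjoint $K_5$'s plus $7$-connectivity sufficed, Kawarabayashi and Toft would not have needed the three-clique hypothesis with $|L_1\cup L_2\cup L_3|\geq 12$; the third clique is doing essential work that your sketch never identifies.

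A second, more structural problem is your suggestion that the gadgets from the proof of Lemma~\ref{lem:diffk5} ``should transfer.'' They do not, for two reasons. First, those claims operate in the opposite regime $|L_1\cup L_2\cup L_3|<12$, which is what pins down the pairwise intersection sizes they exploit; under the hypothesis $\geq 12$ the overlaps are small and the configurations are different. Second, and more fatally, several of those gadgets invoke Lemma~\ref{lem:nonplanar} to assert that $G$ minus two or three vertices is $4$-connected and non-planar; that lemma is derived from the Four Color Theorem applied to the minimal $8$-chromatic counterexample $G$, a hypothesis the Kawarabayashi--Toft theorem does not have. For a general $7$-connected graph on $19$ vertices you cannot assume non-planarity after deletions, so Theorem~\ref{th:triangular} cannot be applied, and the whole ``apex plus rooted $K_5$-minor'' mechanism collapses. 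To prove the theorem as stated you would need to reconstruct the actual argument of \cite{kt1}, which is a substantial, self-contained case analysis, not an adaptation of the lemmas in this paper.
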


Applying this theorem to the three $K_5$ given by Lemma \ref{lem:diffk5} gives us a contradiction.

\section{Conclusion}

We have seen that $K_7^-$-minor free graphs are $7$-colorable. The techniques used here are not sufficient to prove
that $K_7$-minor free graphs are $7$-colorable because we then have to deal with "sparse" neighborhoods
of degree $8$ and $9$ vertices. However, since $6$-connected $K_8^-$-minor free graphs are $10$-degenerated \cite{song1},
we wonder whether similar techniques can be extended to prove that $K_8^-$-minor free graphs are $9$-colorable.
Currently the best bound for $K_8^-$-minor free graphs is given by the fact that $K_8$-minor free graphs are $10$-colorable \cite{ag1}.

\bibliographystyle{plain}
\nocite{*}
\bibliography{biblio}
\end{document}